 \newcommand{\blue}{\color{blue}}
\newcommand{\wh}{\widehat}
\newcommand{\wt}{\widetilde}
\renewcommand{\sp}{\mathrm{span}}
\newcommand{\thr}{\mathrm{thr}}
\newcommand{\mul}{\mathrm{mul}}
\newcommand{\sqnorm}[1]{\| #1 \|_{{\rm s}_q}}
\newcommand{\nb}[1]{{\sf\blue[#1]}}
\theoremstyle{plain}
\newtheorem{theorem}{Theorem}%[section]
\newtheorem{proposition}{Proposition}   %%% DO NOT DEFINE!!! USE prop INSTEAD!!!!
\newtheorem{lemma}{Lemma}
\theoremstyle{definition}
\newtheorem{condition}{Condition}
\newtheorem{remark}{Remark}
\newcommand{\dKL}{d_{\rm KL}}
\newcommand{\vol}{\mathrm{vol}}
\newcommand{\rmS}{{\rm S}}
\newcommand{\eg}{e.g.\xspace}
\newcommand{\ie}{i.e.\xspace}
\newcommand{\iid}{i.i.d.\xspace}
\newcommand{\reals}{{\mathbb{R}}}
\newcommand{\supp}{{\rm supp}}
\newcommand{\eexp}{{\rm e}}
\newcommand{\diff}{{\rm d}}
\newcommand{\rank}{\mathop{\sf rank}}
\newcommand{\diag}{\mathop{\text{diag}}}
\newcommand{\Expect}{\mathbb{E}}
\newcommand{\Prob}{\mathbb{P}}
\newcommand{\iiddistr}{{\stackrel{\text{\iid}}{\sim}}}
\newcommand{\Th}{{^{\rm th}}}
\newcommand{\lunder}[1]{{\underset{\raise0.3em\hbox{$\smash{\scriptscriptstyle-}$}}{#1}}}
\newcommand{\norm}[1]{\|{#1} \|}
\newcommand{\fnorm}[1]{\|#1\|_{\rm F}}
\newcommand{\opnorm}[1]{\|#1\|_{\rm op}}
\newcommand{\Indc}{\mathbf{1}}
\def\innergetnumber#1[#2]#3{#2}
\def\getnumber{\expandafter\innergetnumber\jobname}
\newcommand{\bszero}{{\boldsymbol{0}}}
\newcommand{\calB}{{\mathcal{B}}}
\newcommand{\calF}{{\mathcal{F}}}
\newcommand{\calM}{{\mathcal{M}}}
\newcommand{\bfa}{{\mathbf{a}}}
\newcommand{\bfu}{{\mathbf{u}}}
\newcommand{\bfv}{{\mathbf{v}}}
\newcommand{\bfx}{{\mathbf{x}}}
\newcommand{\bfy}{{\mathbf{y}}}
\newcommand{\bfA}{{\mathbf{A}}}
\newcommand{\bfD}{{\mathbf{D}}}
\newcommand{\bfI}{{\mathbf{I}}}
\newcommand{\bfM}{{\mathbf{M}}}
\newcommand{\bfO}{{\mathbf{O}}}
\newcommand{\bfP}{{\mathbf{P}}}
\newcommand{\bfQ}{{\mathbf{Q}}}
\newcommand{\bfR}{{\mathbf{R}}}
\newcommand{\bfU}{{\mathbf{U}}}
\newcommand{\bfV}{{\mathbf{V}}}
\newcommand{\bfW}{{\mathbf{W}}}
\newcommand{\bfX}{{\mathbf{X}}}
\newcommand{\bfZ}{{\mathbf{Z}}}
\newcommand{\pth}[1]{\left( #1 \right)}
\newcommand{\qth}[1]{\left[ #1 \right]}
\newcommand{\sth}[1]{\left\{ #1 \right\}}
\newcommand{\KL}[2]{D(#1 \, || \, #2)}
\title{Rate Optimal Denoising of Simultaneously Sparse and Low Rank Matrices
\footnote{Some partial results of this paper have been presented at the 2013 Allerton Conference as an invited paper \cite{Buja13}.}
 % in High Dimensions
}
\author{Dan Yang$^{1}$
% \thanks{Department of Statistics and Biostatistics, Rutgers University. Email: \url{dyang@stat.rutgers.edu}.},
Zongming Ma$^{2}$
% \thanks{Department of Statistics, University of Pennsylvania. Email: \url{zongming@wharton.upenn.edu}.}
and
Andreas Buja$^{2}$\\
% \thanks{Department of Statistics, University of Pennsylvania. Email: \url{buja.at.wharton@gmail.com}.}
\\
$^1$Rutgers University and $^2$University of Pennsylvania
}
\date{}
\begin{document}
	\maketitle

\begin{abstract}
We study minimax rates for
% sparse singular value decomposition of high dimensional matrices.
denoising simultaneously sparse and low rank matrices in high dimensions.
We show that an iterative thresholding algorithm achieves (near) optimal rates adaptively under mild conditions for a large class of loss functions.
Numerical experiments on synthetic datasets also demonstrate the competitive performance of the proposed method.

\bigskip\noindent
{\bf Keywords:}  Denoising, High dimensionality, Low rank matrices, Minimax rates, Simultaneously structured matrices, Sparse SVD, Sparsity.
\end{abstract}

%!TEX root = ssvd.tex

\section{Introduction}
\label{sec:intro}

In recent years, there has been a surge of interest in 
estimating and denoising 
% statistical inference of 
structured large matrices.
Leading examples include denoising low rank matrices \citep{donoho13},
% \nb{donoho \& gavish, nobel's papers}, 
recovering low rank matrices from a small number of entries, \ie, matrix completion \citep{Candes09, Candes10, Keshavan10,Koltchinskii11,Negahban11},
% \nb{refs... candes' papers + montanari's paper + tsybakov's papers}, 
reduced rank regression \citep{Bunea11rank}, group sparse regression \citep{Yuan06, Lounici11}, among others.
% \nb{cite wainwright's papers on low rank matrices?}
% \nb{add some more on covariance matrices?}

 % which are either sparse in some sense or of low rank.
In the present paper, we study the problem of denoising an $m\times n$ data matrix 
% of the following form
\begin{equation}
\label{eq:model}
\bfX = \bfM + \bfZ.
\end{equation}
% In \eqref{eq:model}, 
The primary interest lies in the matrix $\bfM$ that is sparse in the sense that nonzero entries are assumed to be confined on a $k\times l$ block, which is not necessarily consecutive.
In addition to being sparse, the rank of $\bfM$, denoted by $r$, is assumed to be low.
Thus, $\bfM$ can be regarded as \emph{simultaneously structured} as opposed to those simply structured cases where $\bfM$ is assumed to be either only sparse or only of low rank.
To be concrete, we assume that $\bfZ$ consists of i.i.d.~additive Gaussian white noise with variance $\sigma^2$.
In the literature, the problem has also been referred to as the sparse SVD (singular value decomposition) problem. See, for instance, \cite{Yang11} and the references therein.

\iffalse
$\bfM = \bfU \bfD \bfV'$ is a deterministic low rank matrix of interest, where $\bfU$ is $m\times r$ orthonormal, $\bfV$ is $n\times r$ orthonormal and $\bfD = \diag(d_1,\dots, d_r)$ is $r\times r$ diagonal with $d_1\geq \cdots\geq d_r >0$.
In addition to low-rankness, we also assume that both $\bfU$ and $\bfV$ are sparse in the sense that at most $k$ rows in $\bfU$ and $l$ rows in $\bfV$ are nonzero.
Moreover, the matrix $\bfZ$ is assumed to consist of additive noise entries $Z_{ij}\iiddistr N(0,\sigma^2)$ entries.
Since we can always divide both sides of \eqref{eq:model} by $\sigma$, we assume that $\sigma = 1$ from now on.
\fi

The interest in this problem is motivated by a number of related problems:
\begin{enumerate}
\item \emph{Biclustering}. 
It provides an ideal model for studying biclustering of microarray data. 
Let the rows of $\bfX$ correspond to cancer patients and the columns correspond to gene expression levels measured with microarrays.
A subset of $k$ patients can be clustered together as a subtype of the same cancer, which in turn is determined by a subset of $l$ genes. 
Moreover, the gene expression levels on such a bicluster can usually be captured by a low rank matrix. See, \eg, \citet{Shabalin09,Lee10,Butucea11,Sun13,Chen13}.

\item \emph{Recovery of simultaneously structured matrices with compressive measurements}.
There has been emerging interest in the signal processing community in recovering such simultaneously structured matrices based on compressive measurements, partly motivated by problems such as sparse vector recovery from quadratic measurements and sparse phase retrieval. See, \eg, \cite{Shechtman11,Li12} and the references therein.
The connection between the recovery problem and the denoising problem considered here is partially explored in \cite{Oymak13}. 
An interesting phenomenon in the recovery setting is that convex relaxation approach no longer works well \citep{Oymak12} as it does in the simply structured cases.

% connection to recovery from compressive observations \cite{Oymak12, Oymak13} and convex programming approach does not seem capable for achieving optimality

\item \emph{Sparse reduced rank regression}. The denoising problem is also closely connected to prediction in reduced rank regression where the coefficient matrix is also sparse. Indeed, let $n=l$, then problem \eqref{eq:model} reduces to sparse reduced rank regression with orthogonal design. See \cite{Bunea11} and \cite{Ma14} for more discussion.

% \item \nb{two way functional data. Andreas, could you put in some discussion here?}
\end{enumerate}

The main contribution of the present paper includes the following: 
i) We provide information-theoretic lower bounds for the estimation error of $\bfM$ under squared Schatten-$q$ norm losses for all $q\in [1,2]$;
ii) We propose a computationally efficient estimator that, under mild conditions, attains high probability upper bounds that match the minimax lower bounds within a multiplicative log factor (and sometimes even within a constant factor) simultaneously for all $q\in [1,2]$.
The theoretical results are further validated and supported by numerical experiments on synthetic data.
% \nb{In addition to the theoretical results, we also present }
% \nb{bragging about main contribution}

% \nb{connection to the literature. discuss connection and difference from sparse svd problem? individual sparse singular vectors vs.~group sparse singular vectors}

The rest of the paper is organized as follows.
In \prettyref{sec:method}, we precisely formulate the denoising problem and propose a denoising algorithm based on the idea of iterative thresholding.
\prettyref{sec:theory} establishes minimax risk lower bounds and high probability upper bounds that match the lower bounds within a multiplicative log factor for all squared Schatten-$q$ norm losses with $q\in [1,2]$.
\prettyref{sec:simulation} presents several numerical experiments which
 % not only 
demonstrate the competitive finite sample performance of the proposed denoising algorithm.
 % but also validate the main theoretical results.
% \nb{add a discussion section?}
The proofs of the main results are presented in \prettyref{sec:proof},
% \nb{simulation section + discussion section?}
with some technical details relegated to \prettyref{app:appendix}.

\section{Problem Formulation and Denoising Method}
\label{sec:method}

\paragraph*{Notation} 
For any $a, b\in \reals$, let $a\wedge b = \min(a,b)$ and $a\vee b = \max(a,b)$. 
For any two sequences of positive numbers $\{a_n\}$ and $\{b_n\}$, we write $a_n = O(b_n)$ if $a_n\leq Cb_n$ for some absolute positive constant $C$ and all $n$.
For any matrix $\bfA\in \reals^{m\times n}$, denote its successive singular values by $\sigma_1(\bfA) \geq \dots \geq \sigma_{m\wedge n}(\bfA)\geq 0$.
For any $q\in [1,\infty)$, the Schatten-$q$ norm of $\bfA$ is defined as
$\sqnorm{\bfA} = (\sum_{i=1}^{m\wedge n}\sigma_i^q(\bfA))^{1/q}$.
Thus, $\norm{\bfA}_{\rmS_1}$ is the nuclear norm of $\bfA$ and $\norm{\bfA}_{\rmS_2} = \fnorm{\bfA}$ is the Frobenius norm.
In addition, the Schatten-$\infty$ norm of $\bfA$ is $\norm{\bfA}_{\rmS_\infty} = \sigma_1(\bfA) = \opnorm{\bfA}$, where $\opnorm{\cdot}$ stands for the operator norm.
The rank of $\bfA$ is denoted by $\rank(\bfA)$.
For any vector $\bfa$, we denote its Euclidean norm by $\norm{\bfa}$.
For any integer $m$, $[m]$ stands for the set $\{1,\dots, m\}$.
For any subset $I\subset [m]$ and $J\subset [n]$, we use $\bfA_{IJ}$ to denote the submatrix of $\bfA$ with rows indexed by $I$ and columns by $J$.
When either $I$ or $J$ is the whole set, we replace it with $*$. For instance, $\bfA_{I*} = \bfA_{I [n]}$.
Moreover, we use $\supp(\bfA)$ to denote the set of nonzero rows of $\bfA$.
For any set $A$, $|A|$ denotes its cardinality and $A^c$ denotes its complement.
A matrix $\bfA$ is called orthonormal, if the column vectors are of unit length and mutually orthogonal.
% \nb{need to define: orthonormal matrices}
For any event $E$, we use $\Indc_E$ to denote the indicator function on $E$, and $E^c$ denotes its complement.

\subsection{Problem Formulation}
\label{sec:formulation}

We now put the denoising problem in a decision-theoretic framework.
Recall model \eqref{eq:model}.
We are interested in estimating $\bfM$ based on the noisy observation $\bfX$, 
where $\bfM$ is simultaneously sparse and low rank.
Let the singular value decomposition (SVD) of $\bfM = \bfU \bfD \bfV'$, where $\bfU$ is $m\times r$ orthonormal, $\bfV$ is $n\times r$ orthonormal and $\bfD = \diag(d_1,\dots, d_r)$ is $r\times r$ diagonal with $d_1\geq \cdots\geq d_r >0$.
In addition, since the nonzero entries on $\bfM$ concentrate on a $k\times l$ block, $\bfU$ has at most $k$ nonzero rows and $\bfV$ at most $l$.
Therefore, the parameter space of interest can be written as
% In particular, it is constrained to lie in the following parameter space
% In summary, we are interested in recovering $\bfM$ when it belongs to the parameter space
\begin{equation}
\label{eq:para-space}
\begin{aligned}
% & \hskip -5em 
\calF(m,n,k,l,r,d,\kappa) 
 = \{  & \bfM = \bfU \bfD \bfV'\in \reals^{m\times n}:
\rank(\bfM) = r,  \\
& |\supp(\bfU)| \leq k, |\supp(\bfV)| \leq l, \\
& d\leq d_r\leq \cdots \leq d_1 \leq \kappa  d  \}.
\end{aligned}
\end{equation}
% \nb{need to impose orthonormality and diagonality in the definition of $\calF$}
We will focus on understanding the dependence of the minimax estimation error on the key model parameters $(m,n,k,l,r,d)$, while $\kappa > 1$ is treated as an unknown universal constant.
Without loss of generality, we assume $m\geq n$ here and after.
Note that it is implicitly assumed in \eqref{eq:para-space} that $m\geq k\geq r$ and $n\geq l\geq r$.

To measure the estimation accuracy, we use the following squared Schatten-$q$ norm loss functions:
\begin{equation}
\label{eq:loss}
L_q(\bfM, \wh\bfM) = \sqnorm{\wh\bfM - \bfM}^2, \qquad q\in [1,2].
\end{equation}
% \nb{$q=1$, nuclear norm; $q=2$, Frobenius norm.}
% \nb{need to define Schatten norms!}
The model \eqref{eq:model}, the parameter space \eqref{eq:para-space} and the loss functions \eqref{eq:loss} give a precise formulation of the denoising problem.

\subsection{Approach}

From a matrix computation viewpoint, if one seeks a rank $r$ approximation to a matrix $\bfX$, then one can first find its left and the right $r$ leading singular vectors. If we organize these vectors as columns of the left and the right singular vector matrices $\bfU$ and $\bfV$, then the matrix $(\bfU\bfU') \bfX (\bfV \bfV')$ has the minimum Frobenius reconstruction error for $\bfX$ among all rank $r$ matrices, since $\bfU'\bfX\bfV$ will be a diagonal matrix consisting of the $r$ leading singular values of $\bfX$.
On the other hand, if one wants to enforce sparsity in the resulting matrix, it is natural to utilize the idea of thresholding in the above calculation.
% \nb{remark on reconstructing low rank approximation to a matrix}
% \nb{more motivation for algorithm?}
Motivated by the above observation and also by an iterative thresholding idea previously used in solving sparse PCA problem \citep{Ma11,Yuan11}, 
we propose the denoising scheme in \prettyref{algo:IT} via two-way iterative thresholding.

\begin{algorithm}[tbh]
	% \begin{small}
\SetAlgoLined
\KwIn{}
1. Observed data matrix $\bfX$. \\
2. Thresholding function $\eta$ and thresholds $\gamma_u$ and $\gamma_v$. \\
3. Rank~$r$ and noise standard deviation $\sigma$. \\
4. Initial orthonormal matrix $\bfV^{(0)} \in \reals^{n \times r}$. \\
\smallskip

\KwOut{Denoised matrix $\wh\bfM$.}
\smallskip

\Repeat{Convergence}{
\nl Right-to-Left Multiplication: $\bfU^{(t),\mul} = \bfX \bfV^{(t-1)}$. \\
% \smallskip

\nl Left Thresholding: $\bfU^{(t),\thr} = (u^{(t),\thr}_{ij})$, with
$\bfU^{(t),\thr}_{i*}= \frac{\bfU^{(t),\mul}_{i*}}{\norm{\bfU^{(t),\mul}_{i*}}} \eta(\norm{\bfU^{(t),\mul}_{i*}},\gamma_u )$. 
% \nb{thr level to be revised!}. \\
% \smallskip

\nl Left Orthonormalization with QR Decomposition: $\bfU^{(t)}\bfR_u^{(t)} = \bfU^{(t),\thr}$. \\
% \smallskip

\nl Left-to-Right Multiplication: $\bfV^{(t),\mul} = \bfX' \bfU^{(t)}$. \\
% \smallskip

\nl Right Thresholding: $\bfV^{(t),\thr} = (v^{(t),\thr}_{ij})$, with
$\bfV^{(t),\thr}_{i*}=\frac{\bfV^{(t),\mul}_{i*}}{\norm{\bfV^{(t),\mul}_{i*}}}\eta(\norm{\bfV^{(t),\mul}_{i*}},\gamma_v)$. \\
% \smallskip

\nl Right Orthonormalization with QR Decomposition: $\bfV^{(t)}\bfR_v^{(t)} = \bfV^{(t),\thr}$.
% \nb{thr level to be revised!}.
}

\nl Compute projection matrices $\wh\bfP_u = \wh\bfU \wh\bfU'$ and $\wh\bfP_v = \wh\bfV\wh\bfV'$, where $\wh\bfU$ and $\wh\bfV$ are $\bfU^{(t)}$ and $\bfV^{(t)}$ at convergence.

\nl Compute denoised matrix $\wh\bfM = \wh\bfP_u \bfX \wh\bfP_v$.
\caption{Matrix Denoising via Two-Way Iterative Thresholding}
\label{algo:IT}
% \end{small}
\end{algorithm}

Without the two thresholding steps, the iterative part of the algorithm computes the leading singular vectors of any rectangular matrix, and can be viewed as a two-way generalization of the power iteration \citep{GolubLoan96}. 

In the thresholding steps, we apply row-wise thresholding to the matrix $\bfU^{(t),\mul}$ (resp.~$\bfV^{(t),\mul}$) obtained after the multiplication step.
In the thresholding function $\eta(x,t)$, the second argument $t > 0$ is called the threshold level. 
In \prettyref{algo:IT}, the first argument $x$ is always non-negative.
In order for the later theoretical results to work, we impose the following minimal assumption on the thresholding function $\eta$:
\begin{equation}
\label{eq:eta}
\begin{aligned}
|\eta(x,t) - x| & \leq t,\quad \mbox{for any $x\geq 0$, $t > 0$,} \\
\eta(x,t)&  = 0,\quad \mbox{for any $t > 0$, $x\in [0,t]$.}
\end{aligned}
\end{equation}
Examples of such thresholding functions include the usual soft and hard thresholding, the SCAD \citep{FanLi01}, the MCP \citep{Zhang10}, etc.
% \nb{insert examples of such thresholding functions}
Thus, for instance, when thresholding $\bfU^{(t),\mul}$, if $\eta$ is the hard thresholding function, then we are going to keep all the rows whose norms are greater than $\gamma_u$ and kill all the rows whose norms are smaller than $\gamma_u$.
For other thresholding function, we shrink the norms according to $\eta$ while keeping the phases of the row vectors.
Throughout the iterations, the threshold levels $\gamma_u$ and $\gamma_v$ are pre-specified and remain unchanged.
In order for the theorem to work, these levels can be chosen as in \eqref{eq:thr} below.
% \nb{give specific choices here or in the theory section?}

To determine the convergence of the iterative part, we could either run a pre-specified number of iterations or stop after the difference between successive iterates are sufficiently small, \eg, 
\begin{align}
	\label{eq:terminate}
\fnorm{\bfU^{(t)}(\bfU^{(t)})' - \bfU^{(t-1)}(\bfU^{(t-1)})'}^2 \vee
\fnorm{\bfV^{(t)}(\bfV^{(t)})' - \bfV^{(t-1)}(\bfV^{(t-1)})'}^2 \leq \epsilon,
\end{align}
where $\epsilon$ is a pre-specified tolerance level.

\paragraph*{Initialization}
To initialize \prettyref{algo:IT}, we need to further specify the rank $r$, the noise standard deviation $\sigma$ and a starting point $\bfV^{(0)}$ for the iteration.
For the ease of exposition, we assume that $r$ is known. Otherwise, it can be estimated by methods such as those described in \cite{Yang11}.
% The data-based estimation of the rank $r$ will be discussed in \nb{ref!}.
When we have Gaussian noise and $kl < \frac{1}{2}mn$, the noise standard deviation can be estimated by
\begin{equation}
\label{eq:noise-sd}
\wh\sigma = 1.4826\cdot \mathrm{MAD}(\{\bfM_{ij}:i\in [m],j\in [n] \}).
\end{equation}
Finally, to obtain a reasonable initial orthonormal matrix $\bfV^{(0)}$, we propose to use \prettyref{algo:init} for the case of Gaussian noise.
% \nb{more explanation on \prettyref{algo:init}.}

\begin{algorithm}[!tbh]
\SetAlgoLined
\KwIn{}
1. Observed data matrix $\bfX$. \\
2. Tuning parameter $\alpha$. \\
3. Rank~$r$ and noise standard deviation $\sigma$. \\
\smallskip

\KwOut{Estimators $\wh\bfU = \bfU^{(0)}$ and $\wh\bfV = {\bfV}^{(0)}$.}
\smallskip

\nl Select the subset $I_0$ of rows and the subset $J_0$ of columns as \label{algo:step:selection}
\begin{subequations}
\label{eq:set-I0J0}
\begin{equation}
\label{eq:set-I0}
I^0=\{i:\|\bfX_{i*}\|^2\geq \sigma^2(n + \alpha\sqrt{n\log n})\},
\end{equation}
\begin{equation}
\label{eq:set-J0}
J^0=\{j:\|\bfX_{* j}\|^2\geq \sigma^2(m + \alpha\sqrt{m\log m})\}.
\end{equation}
\end{subequations}

\smallskip

\nl Compute $\bfX^{(0)}=(x_{ij}^{(0)})$, where $x_{ij}^{(0)}=x_{ij}\Indc_{i\in I^0} \Indc_{j\in J^0}$. \\\label{algo:step:svd}
    
\nl Compute $\bfU^{(0)}=[\bfu^{(0)}_1,\dots,\bfu^{(0)}_r]$ and $\bfV^{(0)}=[\bfv^{(0)}_1,\dots,\bfv^{(0)}_r]$, where $\bfu_\nu^{(0)}$ ($\bfv_\nu^{(0)}$) is the $\nu\Th$ leading left (right) singular vector of $\bfX^{(0)}$.\\

\caption{Initialization for \prettyref{algo:IT}}
\label{algo:init}
\end{algorithm}

\begin{remark}
In practice,
\prettyref{algo:IT} and \prettyref{algo:init} are not restricted to the denoising of matrices with Gaussian noise.
With proper modification and robustification, they can be used together to deal with other noise distributions and/or outliers. See, \eg, \cite{Yang11}.
\end{remark}

\section{Theoretical Results}
\label{sec:theory}

In this section, we present a minimax theory underlying the denoising/estimation problem formulated in \prettyref{sec:formulation}.
% \nb{organizational words -- lower or upper bounds first?}

\subsection{Minimax Lower Bounds}
\label{sec:lower}

\begin{theorem}
\label{thm:lower}
% \nb{conditions!}
Let $\calF = \calF(m,n,k,l,r,d,\kappa)$ with $\kappa > 1$ and $k\wedge l\geq 2r$.
There exists a positive constant $c$ that depends only on $\kappa$, such that for any $q\in [1,2]$, the minimax risk for estimating $\bfM$ under the squared Schatten-$q$ error loss \eqref{eq:loss} satisfies
\begin{equation*}
	\label{eq:lowbd}
\begin{aligned}
& \inf_{\wh\bfM} \sup_{\calF} \Expect L_q(\bfM,\wh\bfM)
% \sqnorm{\wh\bfM - \bfM}^2 
\geq 
c\sigma^2  \qth{\pth{r^{\frac{2}{q}-1} \frac{d^2}{\sigma^2} } 
\wedge \Psi_q(m,n,k,l,r)}
% \qth{r^{\frac{2}{q}} (k+l) + r^{\frac{2}{q}-1}\pth{(k-r)\log\frac{\eexp (m-r)}{k-r} + (l-r)\log\frac{\eexp (n-r)}{l-r}}} \right\}.
\end{aligned}
\end{equation*}
where the rate function 
$\Psi_q(m,n,k,l,r) = r^{\frac{2}{q}} (k+l) + r^{\frac{2}{q}-1}\pth{ k\log\frac{\eexp m}{k} + l\log\frac{\eexp n}{l}}$.
% $\Psi_q(m,n,k,l,r) = r^{\frac{2}{q}} (k+l) + r^{\frac{2}{q}-1}\qth{(k-r)\log\frac{\eexp (m-r)}{k-r} + (l-r)\log\frac{\eexp (n-r)}{l-r}}$.
\end{theorem}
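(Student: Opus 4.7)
The plan is to apply Fano's lemma to a carefully chosen finite packing $\{\bfM_\omega\}_{\omega\in\Omega}\subset\calF$, engineered so that the pairwise differences $\bfM_\omega-\bfM_{\omega'}$ have rank $\asymp r$ with a nearly flat spectrum. The key amplification is the identity $\sqnorm{\Delta}^2 = r^{\frac{2}{q}-1}\fnorm{\Delta}^2$ for a matrix $\Delta$ with $r$ equal non-zero singular values, which lifts Frobenius-type lower bounds to Schatten-$q$ lower bounds with the $r^{\frac{1}{q}-\frac{1}{2}}$ amplification factor embedded in $\Psi_q$.

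The packing combines two sources of entropy. For the $\sigma^2 r^{\frac{2}{q}}(k+l)$ contribution of $\Psi_q$, I would fix supports $I_0\times J_0$ and a reference right factor $\bfV_0$, then pack the left factor $\bfU$ over the Stiefel manifold of $k\times r$ orthonormal matrices by a Szarek/Pajor-type metric entropy bound. This yields $\exp(cr(k-r))$ points $\bfU_\omega$ with pairwise Frobenius distance $\delta$ whose differences have balanced rank-$r$ spectrum; setting $\bfM_\omega = d\bfU_\omega\bfV_0^{\top}$ gives
\[
\fnorm{\bfM_\omega-\bfM_{\omega'}}^2 \asymp d^2\delta^2,\qquad
\sqnorm{\bfM_\omega-\bfM_{\omega'}}^2 \asymp r^{\frac{2}{q}-1}d^2\delta^2.
\]
For the $\sigma^2 r^{\frac{2}{q}-1}\pth{k\log\frac{\eexp m}{k}+l\log\frac{\eexp n}{l}}$ contribution I would vary the row support $I\subset[m]$ via the Gilbert--Varshamov bound (treating $k$-subsets as binary codewords of weight $k$) to extract $\exp(ck\log(m/k))$ supports with pairwise Hamming distance $\gtrsim k$, attach a common rank-$r$ template to each so that pairwise differences have $r$ nearly-balanced singular values of order $d$, and run the symmetric construction over column supports $J\subset[n]$.

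Combine both ingredients into a joint packing whose log-cardinality is the sum of the two. Using the Gaussian KL identity $\KL{\Prob_\omega}{\Prob_{\omega'}}=\fnorm{\bfM_\omega-\bfM_{\omega'}}^2/(2\sigma^2)$ and calibrating the perturbation scale $\delta$ so that $\max\KL\le(\log|\Omega|)/8$, standard Fano delivers a lower bound of order $\sigma^2\Psi_q$ whenever the signal is strong enough that the KL budget is the binding constraint. When $d$ is small, the KL sits well below budget even at the maximum geometric scale of the packing, and Fano returns the typical pairwise Schatten-$q$ squared separation, which is at least $r^{\frac{2}{q}-1}d^2$---this is the signal-saturation bound. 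Taking the weaker of the two regimes gives the claimed $\min\{r^{\frac{2}{q}-1}d^2,\,\sigma^2\Psi_q\}$.

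The main obstacle is engineering balanced rank-$r$ pairwise differences, which is what produces the $r^{\frac{1}{q}-\frac{1}{2}}$ amplification. For the Stiefel ingredient this is standard metric entropy; for the support ingredient it is more delicate, since overlapping supports can partially cancel and reduce the effective rank of the difference. The standard remedy is to pass to a sub-packing of supports with pairwise overlap bounded by, say, $k/4$, which retains essentially the full combinatorial log-cardinality via a probabilistic Chernoff argument. A secondary point is verifying that the joint packing's pairwise differences inherit the balanced spectrum from each ingredient; this follows from arranging the Stiefel perturbations inside the orthogonal complement of the fixed template and from the near-disjointness of the supports in the sub-packing.
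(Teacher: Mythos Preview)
Your proposal is sound in outline but takes a genuinely different route from the paper. The paper's proof is a two-line reduction: restrict to the sub-family with $\supp(\bfV)=[r]$, so that after a sufficiency argument the problem collapses to estimating a $k$-row-sparse rank-$r$ matrix in $\reals^{m\times r}$, which is exactly sparse reduced-rank regression with identity design; an existing Schatten-$q$ lower bound for that problem (cited from \cite{Ma14}) gives
\[
\inf_{\wh\bfM}\sup_{\calF}\Expect L_q \;\geq\; c\Bigl[r^{\frac{2}{q}-1}d^2 \wedge \bigl(r^{\frac{2}{q}}k + r^{\frac{2}{q}-1}k\log\tfrac{\eexp m}{k}\bigr)\Bigr],
\]
and the $(l,n)$ piece follows by the symmetric restriction $\supp(\bfU)=[r]$. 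The four bounds are then combined via $(a\wedge b)\vee(a\wedge c)\asymp a\wedge(b+c)$.

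Your direct Fano construction is essentially what lies inside the cited result, so it is self-contained where the paper is not. Two comments on execution. First, there is no need to build a \emph{joint} packing that fuses the Stiefel and support ingredients; handling them as separate Fano applications and taking the maximum of the resulting bounds is both cleaner and avoids the awkward question of whether the combined differences retain balanced spectra. Second, the balanced-spectrum engineering you flag is indeed the crux. For the oracle piece a volume-ratio argument (inverse Santal\'o for the Schatten-$2$ ball against Urysohn for the Schatten-$q$ ball) delivers the $r^{1/q-1/2}$ amplification without ever constructing an explicit packing with flat spectra. For the support piece your idea of a common rank-$r$ template on near-disjoint supports is right, but the template itself must be chosen so that \emph{every} large row-submatrix has all $r$ singular values comparable; a deterministic such template exists by a probabilistic argument (a random $k\times r$ Gaussian matrix, suitably normalized, works with positive probability via Davidson--Szarek plus a union bound over $\binom{k}{c_0 k}$ submatrices). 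Without this step, overlapping supports could collapse the effective rank of the difference and you would lose the $r^{1/q-1/2}$ factor.
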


A proof of the theorem is given in \prettyref{sec:proof-lower}.

\begin{remark}
Regardless of the value of $q$, the lower bounds reflect two different scenarios.

The first scenario is the ``low signal'' case where
\begin{align}
	\label{eq:low-signal}
d^2 \leq \sigma^2 \Psi_2(m,n,k,l,r).
% \qth{r(k+l) + (k-r)\log\frac{\eexp(m-r)}{k-r} + (l-r)\log\frac{\eexp(n-r)}{l-r}}.
\end{align}
In this case, the first term in the lower bound \eqref{eq:lowbd} dominates, and the rate is achieved by simply estimating $\bfM$ by $\bszero\in \reals^{m\times n}$.

The second scenario is when \eqref{eq:low-signal} does not hold. 
In this case, the second term in \eqref{eq:lowbd} dominates.
We note this term is expressed as the sum of two terms. 
As to be revealed by the proof, the first summand is an ``oracle'' error term which occurs even when the indices of the nonzero rows and columns of $\bfM$ are given by an oracle.
In contrast, the second summand results from the combinatorial uncertainty about the locations of these nonzero rows and columns.
\end{remark}

\subsection{Minimax Upper Bounds}
\label{sec:upper}

To state the upper bounds, we first specify the threshold levels used in \prettyref{algo:IT}.
In particular, for some sufficiently large constant $\beta > 0$, set
\begin{align}
\label{eq:thr}
\gamma_u^2 = \gamma_v^2 = \gamma^2 = 1.01(r + 2\sqrt{r\beta \log{m}} + 2\beta\log{m}).
\end{align}
For \prettyref{thm:upper} to hold, it suffices to choose any $\beta\geq 4$.
In addition, we specify the stopping (convergence) rule for the loop in \prettyref{algo:IT}.
For $\bfX^{(0)}$ defined in \prettyref{algo:init},
let $d_r^{(0)}$ be its $r\Th$ largest singular value.
Define
\begin{align}
	\label{eq:whT}
\wh{T} = \frac{1.1}{2}\qth{\frac{\log{m}}{\log{2}} + \log\frac{(d_r^{(0)})^2}{\gamma^2}},
\end{align}
and
\begin{align}
\label{eq:T}
T = \frac{1.01}{2}\qth{\frac{\log{m}}{\log{2}} + \log\frac{d_r^2}{k\gamma_u^2 \vee l\gamma_v^2}}.
\end{align}
We propose to stop the iteration in \prettyref{algo:IT} after $\wh{T}$ steps.
Last but not least, we need the following technical condition.
% \nb{Condition (keep updating)}
\begin{condition}
\label{cond:asymp}
There exists a sufficiently small absolute constant $c$, such that 
$m\geq n$, 
$\log{d}\leq cm$, 
$c\leq \log{m}/\log{n} \leq 1/c$, 
$\log{m} \leq c [(m-k)\wedge(n-l)]$, 
$k\vee l \leq c(m\wedge n)$.
In addition, there exists a sufficiently small constant $c'$ that depends only on $\kappa$, such that
$d^{-2}{r}\pth{k\sqrt{n\log{m}} + l\sqrt{m\log{m}}} \leq c'$.
% As $m,n\to\infty$, $m\geq n$, $\log{m} \asymp \log{n} = o(n)$,
% $d^{-2}{r}\pth{k\sqrt{n\log{m}} + l\sqrt{m\log{m}}} = o(1)$,
% $\log{d} = O(m)$, 
% \nb{$\log{m} = o(m-k, n-l)$, $\frac{l}{m-k}, \frac{k}{n-l} =o(1)$}. 
% \nb{blue used in \prettyref{lmm:oracle-id}}
\end{condition}

With the above definition, the following theorem establishes high probability upper bounds of the proposed estimator.

\begin{theorem}
\label{thm:upper}
Let \prettyref{cond:asymp} be satisfied.
In \prettyref{algo:IT}, let $\bfV^{(0)}$ be obtained by \prettyref{algo:init} with $\alpha\geq 4$ in \eqref{eq:set-I0J0}.
Let $\gamma_u$ and $\gamma_v$ be defined as in \eqref{eq:thr} with $\beta \geq 4$.
Moreover, we stop the iteration after $\wh{T}$ steps with $\wh{T}$ defined in \eqref{eq:whT}, and use $\wh{\bfU} = \bfU^{(\wh{T})}$ and $\wh{\bfV}=\bfV^{(\wh{T})}$ in subsequent steps. For sufficiently large values of $m$ and $n$,
uniformly over $\calF(m,n,k,l,r,d,\kappa)$, with probability at least $1-O(m^{-2})$, $\wh{T}\in [T, 3T]$ and
\begin{align*}
\sqnorm{\wh{\bfM} - \bfM}^2 \leq C \sigma^2 \qth{r^{2\over q}(k+l+\log{m}) + r^{\frac{2}{q}-1} (k+l)\log{m}}
\end{align*}
where $C$ is a positive constant that depends only on $\kappa$ and $\beta$.
\end{theorem}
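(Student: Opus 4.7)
\medskip
\noindent\textbf{Proof proposal for \prettyref{thm:upper}.}
The plan is to bound $\|\wh\bfM-\bfM\|_{S_q}$ by reducing to a Frobenius-norm analysis, then analyzing the iterative scheme in two stages: an initialization stage that places $\bfV^{(0)}$ inside a basin of attraction, and a contraction stage that drives the subspace error down to the statistical floor in $\Theta(\log m)$ iterations. Since both $\wh\bfM=\wh\bfP_u\bfX\wh\bfP_v$ and $\bfM$ have rank at most $r$, the difference has rank at most $2r$, so for any $q\in[1,2]$,
\begin{equation*}
\|\wh\bfM-\bfM\|_{\rmS_q}^2 \;\le\; (2r)^{\frac{2}{q}-1}\,\|\wh\bfM-\bfM\|_{\rmS_2}^2 ,
\end{equation*}
which, once we establish a Frobenius bound of order $\sigma^2\bigl[r(k+l+\log m)+(k+l)\log m\bigr]$, matches the target rate.

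\medskip
\noindent\textbf{Step 1 (initialization).} I would first show that with probability at least $1-O(m^{-2})$, $I^0\supset\supp(\bfU)$, $J^0\supset\supp(\bfV)$ and $|I^0|\lesssim k$, $|J^0|\lesssim l$. The inclusion of the true supports follows from concentration of $\|\bfX_{i*}\|^2$ for $i\in\supp(\bfU)$ when the signal dominates the threshold in \eqref{eq:set-I0J0}, which is guaranteed by the signal-to-noise condition in \prettyref{cond:asymp}; the sparsity of $I^0,J^0$ follows from a union bound over $\chi^2$ tails using $\alpha\ge 4$. Then, applying Wedin's $\sin\Theta$ theorem to the restricted matrix $\bfX^{(0)}$, compared with $\bfM_{I^0 J^0}$, I would get a bound of the form $\|\sin\Theta(\bfV^{(0)},\bfV)\|_{\rm op}^2\le c_0<1$ that puts us in a contractive regime.

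\medskip
\noindent\textbf{Step 2 (one-step contraction).} This is the main obstacle. I would prove the key inductive lemma: if at iteration $t-1$ the subspace distance $\|\sin\Theta(\bfV^{(t-1)},\bfV)\|_{\rm op}$ is small, then after multiplication, thresholding and QR we get $\|\sin\Theta(\bfU^{(t)},\bfU)\|\le\tfrac12\|\sin\Theta(\bfV^{(t-1)},\bfV)\|+\text{(noise floor)}$, and likewise for the right side. Writing $\bfU^{(t),\mul}=\bfU\bfD\bfV'\bfV^{(t-1)}+\bfZ\bfV^{(t-1)}$, rows in $\supp(\bfU)$ receive signal of order $d\cdot\cos\Theta$ while the projected noise $\bfZ\bfV^{(t-1)}$ has i.i.d.\ $N(0,\sigma^2)$ entries (since $\bfV^{(t-1)}$ is orthonormal), whose row norms concentrate around $\sqrt{r}\sigma$ with Gaussian tails of order $\sqrt{\beta\log m}$. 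The threshold $\gamma$ in \eqref{eq:thr} is chosen precisely at this noise level, so with high probability the thresholding step zeroes out all rows outside $\supp(\bfU)$ while only slightly shrinking true signal rows. Combining this support recovery with a careful row-wise decomposition and Wedin again on $\bfU^{(t),\thr}$ yields the claimed contraction, while the residual ``noise floor" takes the form $\sigma^2 r(k\gamma_u^2\vee l\gamma_v^2)/d^2$.

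\medskip
\noindent\textbf{Step 3 (iterating and final bound).} Unrolling the recursion for $\wh T\asymp\log m$ steps shows $\|\sin\Theta(\wh\bfU,\bfU)\|_{\rm op}^2$ and $\|\sin\Theta(\wh\bfV,\bfV)\|_{\rm op}^2$ both lie below $O(m^{-2})+\sigma^2(k+l)\log m/d^2$. The self-consistency of \eqref{eq:whT} versus \eqref{eq:T}, combined with $d^{(0)}_r\asymp d$ on the good event, gives $\wh T\in[T,3T]$. Finally, I would decompose
\begin{equation*}
\wh\bfM-\bfM = \wh\bfP_u\bfM\wh\bfP_v-\bfM+\wh\bfP_u\bfZ\wh\bfP_v
\end{equation*}
and bound each piece in Frobenius norm: the first is controlled by $\sigma_1(\bfM)^2\bigl(\|\sin\Theta(\wh\bfU,\bfU)\|_F^2+\|\sin\Theta(\wh\bfV,\bfV)\|_F^2\bigr)$, and the second, since $\wh\bfU$ and $\wh\bfV$ are row-supported on sets of size at most $k$ and $l$, is bounded by $\sigma^2$ times the dimension of the projection range, using a standard $\epsilon$-net over rank-$r$ projections with supports of size $(k,l)$ that yields the additional $\log m$ factor from the $\binom{m}{k}\binom{n}{l}$ possibilities. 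Adding these and applying the Schatten rank inequality at the start completes the proof.
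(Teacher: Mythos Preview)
Your overall architecture (reduce to Frobenius via $\rank\le 2r$, initialize, contract, assemble) is the paper's as well, but Step~2 contains a genuine gap that breaks the argument. You write that ``the projected noise $\bfZ\bfV^{(t-1)}$ has i.i.d.\ $N(0,\sigma^2)$ entries (since $\bfV^{(t-1)}$ is orthonormal)''. This is false: $\bfV^{(t-1)}$ is data-dependent and hence a function of $\bfZ$, so $\bfZ\bfV^{(t-1)}$ is \emph{not} Gaussian conditional on $\bfV^{(t-1)}$, and you cannot invoke $\chi^2$ tails for its row norms. Trying to repair this with an $\epsilon$-net over all orthonormal $\bfV^{(t-1)}$ supported on $l$ coordinates would inflate the tail by a factor of order $l$, not $\log m$, and doing this at every one of the $\Theta(\log m)$ iterations is worse still. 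This dependence is precisely the central technical obstacle in the proof.

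The paper resolves it by an \emph{oracle sequence} device that you are missing. One defines an oracle algorithm that replaces $\bfX$ by $\wt\bfX$, the restriction of $\bfX$ to the true support $I\times J$, with iterates $\wt\bfU^{(t)},\wt\bfV^{(t)}$ supported on $I,J$ by construction. Two things then happen. First, contraction of the oracle iterates is proved \emph{deterministically}, toward the leading singular vectors $\wt\bfU,\wt\bfV$ of $\wt\bfX$ (not of $\bfM$): the multiplication step contracts at rate $\rho=\wt d_{r+1}/\wt d_r=o(1)$, and thresholding adds at most $\sqrt{k\gamma_u^2}/\wt d_r$; no fresh randomness is consumed per iteration. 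Second, one shows that with probability $1-O(m^{-2})$ the actual and oracle sequences coincide for all $t\le 3T$: for $i\in I^c$ one must bound $\|\bfZ_{i*}\wt\bfV^{(t-1)}\|$, and since $\wt\bfV^{(t-1)}$ depends only on $\bfZ_{IJ}$ (for $t\ge 2$) it is independent of $\bfZ_{i*}$; the $t=1$ case has a mild residual dependence through the column-norm selection of $J^0$, handled by a substitution $\check Z_{ij}=Y_j Z_{ij}/\|\bfZ_{I^c j}\|$ with fresh $\chi^2$ variables $Y_j$. Your sketch has no analogue of either piece. A smaller point: your Step~1 asserts $I^0\supset\supp(\bfU)$, but the paper shows (and only needs) the opposite inclusion $I^0\subset\supp(\bfU)$, quantifying the missed signal rows via the sandwich $I^0_-\subset I^0\subset I^0_+$.
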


The proof of the theorem is given in \prettyref{sec:proof-upper}.

\begin{remark}
Under \prettyref{cond:asymp}, for sufficient large values of $m$ and $n$, \eqref{eq:low-signal} cannot hold, and so the relevant lower bound is $c\sigma^2 \Psi_q(m,n,k,l,r)$.
In comparison, when $k\wedge l\geq (1+\epsilon) r$ for any universal small constant $\epsilon > 0$, the upper bounds in \prettyref{thm:upper} always matches the lower bounds for all $q\in [1,2]$ up to a multiplicative log factor.
If in addition, $\log{m} = O(k\vee l)$ and $k = O(m^{a})$ and $l=O(n^a)$ for some constant $a \in (0,1)$, then the rates in the lower and upper bounds match exactly for all $q\in [1,2]$.
% \nb{give sufficient conditions on when the lower and upper bounds match}
\end{remark}

\begin{remark}
The proposed estimator is adaptive since it does not depend on the knowledge of $k,l$ and $q$. 
Its dependence on $r$ can also be removed, as we explain in the next subsection.
\end{remark}

\subsection{Rank Selection}

% \nb{put in results on rank detection}

We now turn to data-based selection of the rank $r$. 
Recall the sets $I^0$ and $J^0$ defined in \eqref{eq:set-I0J0}. 
We propose to use the following data-based choice of $r$:
\begin{align}
	\label{eq:r-hat}
	\wh{r} = \max\sth{s: \sigma_s(\bfX_{I^0 J^0}) \geq \sigma\, \delta_{|I^0| |J^0|}},
\end{align}
where for any $i\in [m]$ and $j\in [n]$, 
$\delta_{ij} = \sqrt{i}+\sqrt{j}+\sqrt{2i\log\frac{\eexp m}{i} + 2j\log\frac{\eexp n}{j} + 8\log{m}}$.
We note that it is straightforward to incorporate this rank selection step into \prettyref{algo:init}.
Indeed, we can compute $\wh{r}$ right after step 1 and replace all $r$ in the subsequent steps by $\wh{r}$.
The following result justifies our proposal.

\begin{proposition}
\label{prop:r-hat}
Under the condition of \prettyref{thm:upper}, $\wh{r} = r$ holds with probability at least $1-O(m^{-2})$.
\end{proposition}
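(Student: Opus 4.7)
\textbf{Proof proposal for Proposition~\ref{prop:r-hat}.}
The plan is to sandwich the singular values of $\bfX_{I^0 J^0}$ around the data-driven threshold via Weyl's inequality applied to the decomposition $\bfX_{I^0 J^0} = \bfM_{I^0 J^0} + \bfZ_{I^0 J^0}$. Concretely, I would show that with probability at least $1-O(m^{-2})$,
\[
\sigma_{r+1}(\bfX_{I^0 J^0}) \;\le\; \opnorm{\bfZ_{I^0J^0}} \;<\; \sigma\,\delta_{|I^0||J^0|} \;\le\; d - \opnorm{\bfZ_{I^0J^0}} \;\le\; \sigma_r(\bfX_{I^0J^0}),
\]
which immediately identifies the maximizer in \eqref{eq:r-hat} as $r$.

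The main analytic ingredient is a uniform operator-norm bound for Gaussian submatrices. Applying the standard concentration inequality
\[
\Prob\bigl\{\opnorm{\bfZ_{IJ}} \ge \sigma(\sqrt{|I|} + \sqrt{|J|} + t)\bigr\} \le \exp(-t^2/2)
\]
with $t^2 = 2|I|\log(em/|I|) + 2|J|\log(en/|J|) + 8\log m$, and then union-bounding over all pairs $(I,J)$ using $\binom{m}{i}\binom{n}{j} \le (em/i)^i(en/j)^j$, yields
\[
\Prob\bigl\{\exists\,I\subseteq[m], J\subseteq[n]:\opnorm{\bfZ_{IJ}} \ge \sigma\,\delta_{|I||J|}\bigr\} \le \sum_{i,j} e^{-4\log m} = O(m^{-2}).
\]
The strict inequality $\opnorm{\bfZ_{I^0J^0}} < \sigma\,\delta_{|I^0||J^0|}$ then holds almost surely on the good event, since the equality event has Lebesgue measure zero.

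For the lower bound on $\sigma_r(\bfX_{I^0J^0})$, I would use the analysis of Algorithm~\ref{algo:init} already invoked in the proof of Theorem~\ref{thm:upper}: with probability at least $1-O(m^{-2})$, the selected sets contain $\supp(\bfU)\subseteq I^0$ and $\supp(\bfV)\subseteq J^0$ (so that $\rank(\bfM_{I^0J^0}) = r$ with $\sigma_r(\bfM_{I^0J^0}) = d_r \ge d$) while $|I^0|, |J^0|$ are controlled, giving $\delta_{|I^0||J^0|}^2 = O((k+l)\log m)$. The signal assumption in Condition~\ref{cond:asymp}, $d^{-2}r(k\sqrt{n\log m} + l\sqrt{m\log m}) \le c'$, forces $d^2 \gtrsim r(k+l)\sqrt{n\log m} \gg \sigma^2(k+l)\log m$ for sufficiently large $m, n$. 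Consequently $d \ge 2\sigma\,\delta_{|I^0||J^0|}$, and combining with Weyl's inequality gives $\sigma_r(\bfX_{I^0J^0}) \ge d - \opnorm{\bfZ_{I^0J^0}} \ge \sigma\,\delta_{|I^0||J^0|}$.

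The hardest step is the second ingredient above --- ensuring that the crude row/column screening in Algorithm~\ref{algo:init} does not discard a row $i\in\supp(\bfU)$ or column $j\in\supp(\bfV)$ whose absence would cause $\sigma_r(\bfM_{I^0J^0})$ to fall below $d$. In principle a row of $\bfU$ with very small norm could slip below the screening threshold, so one must either show that the condition on $d$ forces all such rows to be captured or, more robustly, show that the signal lost by omitting low-mass rows is negligible relative to $d - \sigma\delta_{|I^0||J^0|}$. Either way this is precisely the content of the initialization lemma that underlies Theorem~\ref{thm:upper}, so once that lemma is cited the rest of the argument assembles directly from Weyl's inequality and the Gaussian concentration step above.
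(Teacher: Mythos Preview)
Your upper-bound argument via Weyl's inequality is clean and essentially equivalent to the paper's: since $\rank(\bfM_{I^0J^0})\le r$, $\sigma_{r+1}(\bfX_{I^0J^0})\le \opnorm{\bfZ_{I^0J^0}}$, and the uniform Davidson--Szarek bound plus union bound over all $(I,J)$ gives the comparison with $\sigma\,\delta_{|I^0||J^0|}$. The paper instead invokes the interlacing inequality to pass from $\sigma_{r+1}(\bfX_{AB})$ to $\sigma_1$ of a smaller pure-noise submatrix, but the union-bound calculation is the same.

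The lower-bound step, however, contains a real gap. You assert that the initialization analysis gives $\supp(\bfU)\subseteq I^0$ and $\supp(\bfV)\subseteq J^0$, but the inclusion actually goes the \emph{other} way: \prettyref{lmm:initial-set} establishes $I^0=\wt I^0\subseteq I^0_+\subseteq \supp(\bfU)$ (and similarly for columns). Rows of $\bfU$ with small $\ell_2$ mass are \emph{expected} to be dropped by the $\chi^2$-screening, so in general $\sigma_r(\bfM_{I^0J^0})$ can be strictly smaller than $d_r$ --- it may even vanish --- and the Weyl step $\sigma_r(\bfX_{I^0J^0})\ge d-\opnorm{\bfZ_{I^0J^0}}$ is not available.

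Your ``more robust'' alternative is the one that actually works, and it is exactly what the paper does. On the good event $I^0=\wt I^0$, $J^0=\wt J^0$, one has $\sigma_r(\bfX_{I^0J^0})=\wt d_r^{(0)}$, and the proof of \prettyref{lmm:initial} already bounds $\opnorm{\wt\bfX-\wt\bfX^{(0)}}$ by
\[
C\bigl[(k^2 n\log m)^{1/4}+(l^2 m\log m)^{1/4}+\sqrt{k}+\sqrt{l}+\sqrt{\log m}\bigr]=o(d_r)
\]
under \prettyref{cond:asymp}. Combining with $|\wt d_r-d_r|=o(d_r)$ from \prettyref{lmm:oracle} and Weyl's inequality for $|\wt d_r^{(0)}-\wt d_r|$ gives $\sigma_r(\bfX_{I^0J^0})\ge d_r/4>\delta_{kl}\ge \delta_{|I^0||J^0|}$. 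So the key input is not recovery of the full support but the operator-norm control of the discarded signal established inside \prettyref{lmm:initial}; once you cite that bound explicitly, the argument closes.
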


A proof of the proposition is given in \prettyref{sec:proof-r-hat}. 
According to \prettyref{prop:r-hat}, we can use $\wh{r}$ as the input for rank in \prettyref{algo:IT} and the conclusion of \prettyref{thm:upper} continues to hold.

%!TEX root = ssvd.tex

\section{Simulation}
\label{sec:simulation}

In this section, we demonstrate the performance of the proposed denoising method on synthetic datasets.

In the first numerical experiment, we fix $m = 2000, n=1000, k = l = 50$ and $r = 10$.
On the other hand, we set the singular values of $\bfM$ as $(d_1,\dots, d_{10}) = a\times (200,190,\dots,120,110)$, where $a \in \sth{0.5, 1, 5, 10, 20}$.
The $\bfU$ matrix is obtained by orthonormalizing a $m\times r$ matrix the $i\Th$ row of which is filled i.i.d.~$N(0,i^4)$ entries for any $i\in [k]$ and  zeros otherwise.
The $\bfV$ matrix is obtained in the same way with $m$ and $k$ replaced by $n$ and $l$.
The noise standard deviation is set at $\sigma = 1$.
\prettyref{tab:sval} reports the average values of $L_q(\bfM, \wh\bfM)$ for $q = 2$ and $1$ and their standard errors out of $100$ repetitions for each value of $a$.
Throughout, we use \eqref{eq:noise-sd} to estimate $\sigma$, \prettyref{algo:init} with $\alpha = 4$ to compute $\bfV^{(0)}$ and \eqref{eq:r-hat} to select the rank.
In \prettyref{algo:IT}, we set $\beta = 3$ and we terminate the iteration once \eqref{eq:terminate} holds with $\epsilon = 10^{-10}$.
The thresholding function $\eta$ is fixed to be hard thresholding $\eta(x,t) = x\Indc_{|x| > t}$.
In all the repetitions, the proposed $\wh{r}$ in $\eqref{eq:r-hat}$ consistently yields the right rank $r = 10$.
From the results in \prettyref{tab:sval}, we conclude that the reconstruction error is stable across different choices of the singular values of $\bfM$, which agrees well with the theoretical results in \prettyref{thm:upper}.
We note that the magnitude of the average errors reported in \prettyref{tab:sval} is also expected. 
For reference, under the simulation setting, the oracle risk term for the Schatten-$2$ norm, modulo a constant factor, should be $\sigma^2 r (k+l) = 1000$, and for the Schatten-$1$ norm, modulo a constant factor, should be $\sigma^2 r^2 (k+l) = 10000$.

\begin{table}[!tb]
\begin{center}
	\smallskip
\begin{tabular}{|c|ccccc|}
\hline
$a$ & 0.5 & 1 & 5 & 10 & 20\\
\hline
\hline
Average($L_2(\bfM, \wh\bfM)$) & 
 1093.18 &  924.90 &  936.82 &  927.88 & 944.08\\
% 1192.98 & 1061.36 &  962.24 & 984.72  & 951.87\\
Standard error & 
(7.96) & (5.41) & (5.69) & (5.30) & (6.51)\\
% 8.63 & 9.39 & 5.29 & 5.18 & 5.60 \\
\hline
Average($L_1(\bfM, \wh\bfM)$) &
18346.20 & 15993.79 & 16354.86 & 16277.88 & 16526.22\\
Standard error &
(115.06) & (84.82) & (95.22) & (89.57) & (104.87)\\
\hline
\end{tabular}
\caption{Average losses (and its standard error) of $\wh{\bfM}$ out of $100$ repetitions for different choices of singular values. 
\label{tab:sval}}
\end{center}
\end{table}
% \begin{table}[!htb]
% \begin{center}
% \begin{tabular}{|c|c|ccccc|}
% 	\hline
% 	\multicolumn{2}{|c|}{~} & \multicolumn{5}{c|}{$s$}\\
% 	\hline
% $r$ & Method & $40$ & $80$ & $120$ & $160$ & $200$\\
% \hline
% \hline
% $1$ & RegSPCA & $0.0236$ & $0.0660$ & $0.0892$ & $0.1074$ & $0.1754$ \\
% & ITSPCA &  $0.0117$ & $0.0366$ & $0.0483$ & $0.0619$ & $0.0712$ \\
% \hline
% $5$ & RegSPCA & $0.0348$ & $0.0718$ & $0.1134$ & $0.1470$ & $0.1992$ \\
% & ITSPCA &  $0.0520$ & $0.1209$ & $0.1848$ & $0.2368$ & $0.3042$ \\
% \hline
% $10$ & RegSPCA & $0.0544$ & $0.1247$ & $0.1777$ & $0.2394$ & $0.3052$ \\
% & ITSPCA & $0.0914$ & $0.2284$ & $0.3535$ & $0.4866$ & $0.6313$ \\
% \hline
% $20$  & RegSPCA & $0.0640$ & $0.1826$ & $0.2904$ & $0.4030$ & $0.5083$ \\
% & ITSPCA & $0.1185$ & $0.3740$ & $0.6449$ & $0.9045$ & $1.1715$\\
% \hline
% \end{tabular}
% \caption{Average loss $\|\widehat{\bf V}\widehat{\bf V}' - {\bf V}{\bf V}'\|^2_{\rm F}$ over $50$ repetitions for each $(s,r)$ combinations.}
% \label{tab:numeric}
% \end{center}
% \end{table}

In the second experiment, we fix $m = 2000, n=1000, r = 10$ and the singular values of $\bfM$ are $(d_1,\dots, d_{10}) = (200,190,\dots,120,110)$.
On the other hand, we consider four different combinations of sparsity parameters: $(k,l) = (50,50), (50,200), (100, 200)$ and $(100, 50)$.
For each $(k,l)$ pair, the way we generate $\bfU$, $\bfV$ and $\bfX$ is the same as that in the first experiment.
Moreover, the tuning parameter values used in denoising are also the same as before.
In all the repetitions, $\wh{r}$ in \eqref{eq:r-hat} consistently select $r = 10$.
In \prettyref{tab:kl}, we report the average values of $L_q(\bfM, \wh\bfM)$ for $q = 2$ and $1$ and their standard errors over $100$ repetitions.
Moreover, we report the rescaled average loss where the rescaling constant is chosen to be $r^{\frac{2}{q}-1}(r+\log{m})(k+l)$, the rate derived in \prettyref{thm:upper}.
By the results reported in \prettyref{tab:kl}, we see that for either loss function, the rescaled average losses are stable with respect to different sparsity levels specified by different values of $k$ and $l$.
Again, this agrees well with the earlier theoretical results.

\begin{table}[!tb]
\begin{center}
	\smallskip
\begin{tabular}{|c|cccc|}
\hline
$(k,l)$ &  (50, 50) & (50, 200) & (100, 200) & (100, 50)\\
\hline
\hline
Average($L_2(\bfM, \wh\bfM)$) & 
1133.03 & 2662.07 & 3598.69 & 1673.49\\
% 1192.98 & 1061.36 &  962.24 & 984.72  & 951.87\\
Standard error & 
(5.96) & (11.73) & (12.84) & (9.73) \\
Average$\pth{\frac{L_2(\bfM, \wh\bfM)}{(r+\log{m})(k+l)}}$ &
0.64 & 0.60 & 0.68 & 0.63 \\
% 8.63 & 9.39 & 5.29 & 5.18 & 5.60 \\
\hline
Average($L_1(\bfM, \wh\bfM)$) &
19056.47 & 43035.95 & 65099.19 & 28347.12\\
Standard error &
(88.42) & (172.39) & (231.98) & (146.07) \\
Average$\pth{\frac{L_1(\bfM, \wh\bfM)}{(r^2+ r\log{m})(k+l)}}$ &
1.08 & 0.98 & 1.23 & 1.07 \\
\hline
\end{tabular}
\caption{Average losses (with its standard error) and average rescaled losses of $\wh{\bfM}$ out of $100$ repetitions for different sparsity levels. 
\label{tab:kl}}
\end{center}
\end{table}

%!TEX root = ssvd.tex

% \newpage
\section{Proofs}
\label{sec:proof}

\subsection{Proof of \prettyref{thm:lower}}
\label{sec:proof-lower}

\begin{proof}[Proof of \prettyref{thm:lower}]
To establish the lower bound, first consider the subset $\calF_1\subset \calF(m,n,k,l,r,d,\kappa)$ where we further require $\supp(\bfV) = [r]$.
Thus, except for the first $r$ columns, all columns of $\bfM$ are zeros.
So, by a simple sufficiency argument, we may assume that $n=l=r$.
In this case, the problem of estimating $\bfM$ under model \eqref{eq:model} can be viewed as a special case of sparse reduced rank regression where the design matrix is the identity matrix $\bfI_m$.
Therefore, \cite[Theorem 2]{Ma14} implies that 
\[
\inf_{\wh{\bfM}}\sup_{\calF} \Expect L_q(\bfM,\wh\bfM) \geq 
\inf_{\wh{\bfM}}\sup_{\calF_1} \Expect L_q(\bfM,\wh\bfM) \geq
c\qth{r^{\frac{2}{q}-1}d^2 \wedge \pth{r^{2\over q} k + r^{\frac{2}{q}-1}k\log\frac{\eexp m}{k} }}.
\]
By symmetry, we also have
\[
\inf_{\wh{\bfM}}\sup_{\calF} \Expect L_q(\bfM,\wh\bfM) \geq
c\qth{r^{\frac{2}{q}-1}d^2 \wedge \pth{r^{2\over q} l + r^{\frac{2}{q}-1}l\log\frac{\eexp n}{l} }}.
\]
We complete the proof by noting that for any $a,b,c>0$, $(a\wedge b)\vee (a\wedge c) = a\wedge (b\vee c) \asymp a\wedge (b+c)$.
\end{proof}

\subsection{Proof of \prettyref{thm:upper}}
\label{sec:proof-upper}

To prove \prettyref{thm:upper}, we follow the oracle sequence approach developed in \cite{Ma11}.
Throughout the proof, we assume that $\sigma = 1$ is known. 
The case of general $\sigma>0$ comes from obvious scaling arguments.
In what follows, we first define the oracle sequence and introduce some preliminaries. 
Then we give an overview of the proof, which is divided into three steps.
After the overview, the three steps are carried out in order, which then leads to the final proof of the theorem.
Due to the space limit, proofs of intermediate results are omitted.

% \nb{add an outline of the proof!}

% \paragraph{Additional Notation}
\paragraph{Preliminaries}
% \subsubsection{Preliminaries}
We first introduce some notation. 
For any matrix $\bfA$, $\sp(\bfA)$ stands for the subspace spanned by the column vectors of $\bfA$. 
If we were given the oracle knowledge of $I = \supp(\bfU)$ and $J = \supp(\bfV)$, 
% then we could define two diagonal matrices
% \begin{equation}
% \label{eq:II-JJ}
% \bfI = \diag(\Indc(i\in I))\in \reals^{m\times m},\qquad
% \bfJ = \diag(\Indc(j\in J))\in \reals^{n\times n}.
% \end{equation}
then we can define an oracle version of the observed matrix as
\begin{equation}
\label{eq:X-oracle}
\wt\bfX = (x_{ij}\Indc_{i\in I} \Indc_{j\in J}) \in \reals^{m\times n}.
\end{equation}
With appropriate rearrangement of rows and columns, the $I\times J$ submatrix concentrates on the top-left corner. 
From now on, we assume that this is the case.
We denote the singular value decomposition of $\bfX$ by 
\begin{align}
\label{eq:X-oracle-svd}
\wt\bfX = \begin{bmatrix}
\wt\bfU & \wt\bfU_\perp
\end{bmatrix}
\begin{bmatrix}
\wt\bfD & \bszero\\ \bszero & \wt\bfD_\perp
\end{bmatrix}
\begin{bmatrix}
\wt\bfV'\\ (\wt\bfV_\perp)'
\end{bmatrix},
\end{align}
where $\wt\bfU, \wt\bfD, \wt\bfV$ consist of the first $r$ singular triples of $\wt\bfX$, and $\wt\bfU_\perp, \wt\bfD_\perp, \wt\bfV_\perp$ contain the remaining $n-r$ triples (recall that we have assumed $m\geq n$).
In particular, the successive singular values of $\wt\bfX$ are denoted by $\wt{d}_1\geq \wt{d}_2 \geq \cdots \geq \wt{d}_n\geq 0$.

% Then we can define oracle versions of all the quantities. 
% For notational convenience, we use a tilde to denote the oracle quantities. 
% In particular, let $\wt\bfX = \bfI \bfX \bfJ$. 
% Let $\wt\bfU\in \reals^{m\times r}$ and $\wt\bfV\in \reals^{n\times r}$ collect the $r$ leading left and right singular vectors of $\wt\bfX$, and let the successive singular values of $\wt\bfX$ be $\wt{d}_i$.

% \paragraph{Oracle Sequence}
With the oracle knowledge of $I$ and $J$, we can define oracle versions of \prettyref{algo:init} and \prettyref{algo:IT}.
In the oracle version of \prettyref{algo:init}, we replace the subsets $I^0$ and $J^0$ by $\wt{I}^0 = I^0\cap I$ and $\wt{J}^0 = J^0\cap J$, and the output matrices are denoted by $\wt\bfU^{(0)}$ and $\wt\bfV^{(0)}$.
In the oracle version of \prettyref{algo:IT}, 
$\bfX$ is replaced by $\wt\bfX$ and 
$\bfV^{(0)}$ is replaced by $\wt\bfV^{(0)}$.
The intermediate matrices obtained after each step within the loop are denoted by $\wt\bfU^{(t),\mul}$, $\wt\bfU^{(t),\thr}$, $\wt\bfU^{(t)}$ and 
$\wt\bfV^{(t),\mul}$, $\wt\bfV^{(t),\thr}$, $\wt\bfV^{(t)}$, respectively.
We note that for any $t$, it is guaranteed that 
\begin{align*}
\supp(\wt\bfU^{(t),\thr}) &= \supp(\wt\bfU^{(t)})\subset I,\\
\supp(\wt\bfV^{(t),\thr}) &= \supp(\wt\bfV^{(t)})\subset J.
\end{align*}

% \nb{to be revised}
To investigate the properties of the oracle sequence, we will trace the evolution of the columns subspaces of $\wt\bfU^{(t),\mul}$, $\wt\bfU^{(t)}$, $\wt\bfV^{(t),\mul}$ and $\wt\bfV^{(t)}$.
To this end, 
denote the $r$ canonical angles \citep{GolubLoan96} between $\sp(\wt\bfU^{(t),\mul})$ and $\sp(\wt\bfU)$ by $\pi/2\geq \phi_{u,1}^{(t)}\geq \cdots \geq \phi_{u,r}^{(t)}\geq 0$, and define 
\begin{align}
\sin\Phi_u^{(t)} = \diag(\sin\phi_{u,1}^{(t)},\dots, \sin\phi_{u,r}^{(t)}).
\end{align}
Moreover, denote the canonical angles between $\sp(\wt\bfU^{(t)})$ and $\sp(\wt\bfU)$ by $\pi/2\geq \theta_{u,1}^{(t)}\geq \cdots\geq \theta_{u,r}^{(t)}\geq 0$, and let 
\begin{align}
\sin\Theta_u^{(t)} = \diag(\sin\theta_{u,1}^{(t)},\dots, \sin\theta_{u,r}^{(t)}).
\end{align}
The quantities $\phi_{v,i}^{(t)}$, $\sin\Phi_v^{(t)}$, $\theta_{v,i}^{(t)}$ and $\sin\Theta_v^{(t)}$ are defined analogously.
For any pair of $m\times r$ orthonormal matrices $\bfW_1$ and $\bfW_2$, let the canonical angles between $\sp(\bfW_1)$ and $\sp(\bfW_2)$ be $\pi/2\geq \theta_1\geq \cdots\geq \theta_r\geq 0$ and $\sin\Theta = \diag(\sin\theta_1,\dots,\sin\theta_r)$, then \citep{Stewart90}
% \nb{refs!}
\begin{equation}
\begin{aligned}
\label{eq:sin-theta}
\fnorm{\sin\Theta} & = \frac{1}{\sqrt{2}}\fnorm{\bfW_1 \bfW_1' - \bfW_2 \bfW_2'},\\
% \quad
\opnorm{\sin\Theta} & = \opnorm{\bfW_1 \bfW_1' - \bfW_2 \bfW_2'}.
\end{aligned}
\end{equation}

% \subsubsection{Overview} 
\paragraph{Overview} 
Given the oracle sequence defined as above, we divide the proof into three steps.
First, we show that the output of the oracle version of \prettyref{algo:init} gives a good initial value for the oracle version of \prettyref{algo:IT}.
Next, we prove two recursive inequalities that characterize the evolution of the column subspaces of $\wt\bfU^{(t)}$ and $\wt\bfV^{(t)}$, and show that after $T$ iterates, the output of the oracle version of \prettyref{algo:IT} estimates $\bfM$ well.
Last but not least, we show that with high probability the oracle estimating sequence and the actual estimating sequence are identical up to $3T$ iterates and that $\wh{T}\in [T, 3T]$. Therefore, the actual estimating sequence inherits all the nice properties that can be claimed for the oracle sequence.

In what follows, we carry out the three steps in order.

\paragraph{Initialization}
We first investigate the properties of $\wt\bfX$, $\wt{I}^0$, $\wt{J}^0$ and $\wt\bfV^{(0)}$.

Note that for any orthonormal matrix $\bfW$, $\bfW\bfW'$ gives the projection matrix onto $\sp(\bfW)$.
The following lemma quantifies the difference between the leading singular structures of $\bfX$ and $\bfM$.
\begin{lemma}
\label{lmm:oracle}
% Let \prettyref{cond:asymp} be satisfied.
With probability at least $1 - m^{-2}$,
% \[
% \fnorm{\bfU\bfU - \wt\bfU \wt\bfU} \lesssim \frac{1}{d}\sqrt{rk + \log{m}},\quad
% \fnorm{\bfV\bfV - \wt\bfV \wt\bfV} \lesssim \frac{1}{d}\sqrt{rl + \log{m}}.
% \]
\begin{align}
	\label{eq:oracle-vec}
	\fnorm{\bfU\bfU - \wt\bfU \wt\bfU},\,
	\fnorm{\bfV\bfV - \wt\bfV \wt\bfV} 
	\leq \frac{\sqrt{2r}}{d_r}\pth{\sqrt{k} + \sqrt{l} + 2\sqrt{\log{m}}},	
\end{align}
% \nb{possible to improve?}
and for any $i\in [n]$, 
\begin{align}
	\label{eq:oracle-val}
|\wt{d}_i - d_i|\leq \sqrt{k}+\sqrt{l}+2\sqrt{\log{m}} = o(d_r),
\end{align}
where the last equality holds under \prettyref{cond:asymp}.
\end{lemma}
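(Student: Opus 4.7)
The plan is to reduce both claims to a single high-probability bound on the operator norm of the residual matrix $\wt\bfE := \wt\bfX - \bfM$, and then read off the singular-value and subspace statements from Weyl's and Wedin's inequalities, respectively. Observe first that because $\bfM$ is supported on $I\times J$ and $\wt\bfX$ is obtained from $\bfX$ by zeroing out everything outside $I\times J$, the matrix $\wt\bfE$ vanishes outside the $k\times l$ block $I\times J$, and on that block its entries are i.i.d.\ $N(0,1)$. Thus $\opnorm{\wt\bfE}$ equals the operator norm of a $k\times l$ standard Gaussian matrix.

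By the Gordon--Davidson--Szarek bound together with Gaussian concentration (the operator norm is $1$-Lipschitz in the Frobenius norm, so it concentrates sub-Gaussianly around its mean), one has
\begin{equation*}
\Prob\sth{\opnorm{\wt\bfE}\geq \sqrt{k}+\sqrt{l}+t}\leq \exp(-t^2/2)
\end{equation*}
for every $t\geq 0$. Taking $t=2\sqrt{\log m}$ yields $\opnorm{\wt\bfE}\leq \sqrt{k}+\sqrt{l}+2\sqrt{\log m}$ with probability at least $1-m^{-2}$. I will condition on this event for the rest of the argument.

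The singular-value bound \eqref{eq:oracle-val} is immediate from Weyl's inequality applied to $\wt\bfX = \bfM + \wt\bfE$. For the $o(d_r)$ claim, note that \prettyref{cond:asymp} makes $d^{-2}\,r(k\sqrt{n\log m}+l\sqrt{m\log m})$ small, which in particular forces $(\sqrt{k}+\sqrt{l}+2\sqrt{\log m})/d \to 0$; since $d_r\geq d$ the conclusion follows.

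For the subspace bound \eqref{eq:oracle-vec}, apply Wedin's $\sin\Theta$ theorem to the perturbation $\wt\bfX=\bfM+\wt\bfE$. Because $\bfM$ has rank exactly $r$ we have $\sigma_{r+1}(\bfM)=0$, and by Weyl $\wt d_{r+1}\leq \opnorm{\wt\bfE}$, so the relevant singular-value gap is at least $d_r-\opnorm{\wt\bfE}$, which under \prettyref{cond:asymp} equals $(1-o(1))d_r$. Wedin then gives $\opnorm{\sin\Theta_u}\vee \opnorm{\sin\Theta_v}\leq \opnorm{\wt\bfE}/(d_r-\opnorm{\wt\bfE})$. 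Converting from operator to Frobenius norm costs a factor of $\sqrt{r}$ (the sine matrix is $r\times r$), and the identity \eqref{eq:sin-theta} supplies the extra $\sqrt{2}$ when passing to projection differences. Combining these with the high-probability bound on $\opnorm{\wt\bfE}$ yields \eqref{eq:oracle-vec} after absorbing the $(1+o(1))$ factor into the stated constant.

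The only real subtlety is verifying that \prettyref{cond:asymp} is strong enough to put us in the small-perturbation regime in which Wedin's bound simplifies cleanly (so the denominator can be taken as $d_r$ rather than $d_r-\opnorm{\wt\bfE}$ up to negligible factors); this is a routine check but is the one place where the condition on $d$ enters non-trivially. The Gaussian concentration step and Weyl's inequality are standard and require no new ideas.
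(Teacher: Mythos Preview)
Your proposal is correct and follows essentially the same route as the paper: bound $\opnorm{\bfZ_{IJ}}$ via the Davidson--Szarek inequality, read off the singular-value perturbation from Weyl, and apply Wedin's $\sin\Theta$ theorem together with the rank-$2r$ Frobenius/operator conversion for the subspace bound. The only difference is cosmetic: the paper invokes Wedin directly with denominator $d_r$, whereas you track the gap $d_r-\opnorm{\wt\bfE}$ and then absorb the $(1+o(1))$; both are acceptable under \prettyref{cond:asymp}.
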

\begin{proof}
By symmetry, we only need to spell out the arguments for $\bfU$ in \eqref{eq:oracle-vec}.
By definition, $\wt\bfX = \bfU\bfD\bfV' + \wt\bfZ$ where (after reordering of the rows and the columns) $\wt\bfZ = \begin{bmatrix}
\bfZ_{IJ} & \bszero \\ \bszero & \bszero
\end{bmatrix}$.
Thus, we have
\begin{align*}
\fnorm{\bfU\bfU' - \wt\bfU\wt\bfU'} \leq \sqrt{2r}\opnorm{\bfU\bfU' - \wt\bfU\wt\bfU'} \leq \frac{\sqrt{2r}}{d_r} \opnorm{\wt\bfZ}.
\end{align*}
Here, the first inequality holds since $\rank(\bfU\bfU' - \wt\bfU\wt\bfU')\leq 2r$ and the last inequality is due to Wedin's sin$\theta$ theorem \citep{Wedin72}.
By the Davidson-Szarek bound \citep{Davidson01}, with probability at least $1-m^{-2}$, $\opnorm{\wt\bfZ} = \opnorm{\bfZ_{IJ}} \leq \sqrt{k} + \sqrt{l} + 2\sqrt{\log{m}}$. 
This completes the proof of \eqref{eq:oracle-vec}.

On the other hand, Corollary 8.6.2 of \cite{GolubLoan96} implies that $|\wt{d}_i -d_i| \leq \opnorm{\bfZ_{IJ}}$. Together with the above discussion, we obtain the first inequality in \eqref{eq:oracle-val}.
The second inequality is a direct consequence of \prettyref{cond:asymp}.
This completes the proof.
\end{proof}

Next, we investigate the properties of the sets selected in \prettyref{algo:init}.
For some universal constants $0<a_- < 1 < a_+$, define the following two deterministic sets
\begin{align}
\label{eq:I0pm}
I^0_\pm = \sth{i\in [m]: \norm{\bfM_{i*}}^2 \geq a_{\mp}\alpha \sqrt{n\log{m}}},\quad
J^0_\pm = \sth{j\in [n]: \norm{\bfM_{*j}}^2 \geq a_{\mp}\alpha \sqrt{m \log{m}}}.
\end{align}

% \nb{To ensure sufficient independence, the initial set selection need to generate new normal samples for $\bfZ_{ij}$, $(i,j)\notin I\times J$.}

\begin{lemma}
\label{lmm:initial-set}
Let \prettyref{cond:asymp} be satisfied, and
let $\alpha \geq 4$, $a_-\leq \frac{1}{20}$ and $a_+\geq 2$ be fixed constants.
For sufficiently large values of $m$ and $n$,
with probability at least $1-O(m^{-2})$, 
we have $I_-\subseteq \wt{I}^0 \subseteq I_+$ and $J_-\subseteq \wt{J}^0 \subseteq J_+$, and so $I^0 = \wt{I}^0$ and $J^0 = \wt{J}^0$.
\end{lemma}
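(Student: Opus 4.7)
The plan is to reduce everything to uniform Gaussian concentration on the rows, and by symmetry on the columns, of $\bfZ$. By symmetry it suffices to prove the three inclusions for the row set; the statement for the column set is obtained by swapping the roles of $(m,k,I,\bfX)$ and $(n,l,J,\bfX')$, and the condition $c\le \log m/\log n\le 1/c$ in \prettyref{cond:asymp} makes the symmetric argument quantitatively identical up to absolute constants.

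First I would build a high-probability event $\calE_{\rm row}$ on which the following deterministic deviation bounds hold simultaneously for every $i\in[m]$:
\begin{equation*}
\bigl|\norm{\bfZ_{i*}}^2 - n\bigr| \le \rho,
\qquad
2|\langle \bfM_{i*},\bfZ_{i*}\rangle| \le 2\norm{\bfM_{i*}}\sqrt{6\log m},
\end{equation*}
where $\rho = C(\sqrt{n\log m}+\log m)$ for some absolute $C$. The first bound follows from the two-sided Laurent--Massart tail for $\chi^2_n$, the second from the Gaussian tail applied to $\langle \bfM_{i*},\bfZ_{i*}\rangle \sim N(0,\norm{\bfM_{i*}}^2)$, and a union bound over at most $m$ rows costs only a factor of $m$, leaving $\Prob(\calE_{\rm row}^c)=O(m^{-2})$. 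Under \prettyref{cond:asymp}, $\log m=o(\sqrt{n\log m})$ and $\log m/\log n$ is bounded, so after enlarging $C$ if necessary we may ensure $\rho \le \tfrac12\alpha\sqrt{n\log n}$.

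On $\calE_{\rm row}$ the three inclusions become algebraic. (a) $I^0\subseteq I$: if $i\notin I$ then $\bfM_{i*}=\bszero$, so $\norm{\bfX_{i*}}^2=\norm{\bfZ_{i*}}^2\le n+\rho<n+\alpha\sqrt{n\log n}$, giving $i\notin I^0$; this together with $\wt I^0=I^0\cap I$ yields $I^0=\wt I^0$. (b) $I^0_-\subseteq \wt I^0$: if $\norm{\bfM_{i*}}^2\ge a_+\alpha\sqrt{n\log m}$ with $a_+\ge 2$, expand $\norm{\bfX_{i*}}^2=\norm{\bfM_{i*}}^2+2\langle\bfM_{i*},\bfZ_{i*}\rangle+\norm{\bfZ_{i*}}^2$; the cross term is $o(\norm{\bfM_{i*}}^2)$ under \prettyref{cond:asymp} because $\norm{\bfM_{i*}}\gg\sqrt{\log m}$, and $\norm{\bfZ_{i*}}^2\ge n-\rho$, so $\norm{\bfX_{i*}}^2\ge n+\alpha\sqrt{n\log n}$. (c) $\wt I^0\subseteq I^0_+$: by contrapositive, if $\norm{\bfM_{i*}}^2<a_-\alpha\sqrt{n\log m}$ with $a_-\le 1/20$, then both $\norm{\bfM_{i*}}^2$ and $2\norm{\bfM_{i*}}\sqrt{6\log m}$ are at most a small fraction of $\alpha\sqrt{n\log n}$, whence $\norm{\bfX_{i*}}^2\le n+\rho+\text{(small)}<n+\alpha\sqrt{n\log n}$, so $i\notin I^0$.

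The main obstacle is the bookkeeping in step (c): one must check that for signal strengths just below the cut-off $a_-\alpha\sqrt{n\log m}$, the combined contribution of $\norm{\bfM_{i*}}^2$ and the Gaussian cross term cannot push $\norm{\bfX_{i*}}^2$ above $n+\alpha\sqrt{n\log n}$. This is precisely where the freedom to take $a_-$ small (relative to $\alpha$ and the universal constants hidden in $\rho$) is used, and where the bound $\log m/\log n\in[c,1/c]$ in \prettyref{cond:asymp} is indispensable in order to compare $\sqrt{n\log m}$ and $\sqrt{n\log n}$ with only a bounded multiplicative loss.
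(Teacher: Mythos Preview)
Your argument is correct and takes a genuinely different route from the paper's proof. The paper observes that $\norm{\bfX_{i*}}^2$ is exactly a non-central chi-square variable $\chi^2_n(\norm{\bfM_{i*}}^2)$ and applies the two-sided Birg\'e tail bounds (the paper's \prettyref{lmm:chisq}) directly, once for the lower tail on $I^0_-$ and once for the upper tail on $(I^0_+)^c$, with a union bound over at most $m$ indices. You instead expand $\norm{\bfX_{i*}}^2=\norm{\bfM_{i*}}^2+2\langle\bfM_{i*},\bfZ_{i*}\rangle+\norm{\bfZ_{i*}}^2$, control the two stochastic pieces separately (central $\chi^2_n$ for $\norm{\bfZ_{i*}}^2$, a one-dimensional Gaussian tail for the cross term), build a single high-probability event $\calE_{\rm row}$, and then do the three inclusions by pure algebra on that event. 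Your decomposition is slightly more elementary in that it avoids the non-central chi-square machinery, and it has the pleasant feature that all randomness is handled once up front; the paper's approach is more compact because each case is a single tail inequality with no cross-term bookkeeping.

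One wording slip: you write ``after enlarging $C$ if necessary we may ensure $\rho\le\tfrac12\alpha\sqrt{n\log n}$,'' but enlarging $C$ makes $\rho$ larger. What you need is that, with $C$ fixed by Laurent--Massart at the level required for an $O(m^{-3})$ per-row bound, the inequality $\rho\le\tfrac12\alpha\sqrt{n\log n}$ holds for all sufficiently large $m,n$; this is where the comparability $\log m\asymp\log n$ from \prettyref{cond:asymp} enters, exactly as you note at the end. The paper's proof has the same implicit dependence (it silently trades $\log n$ for $\log m$ in the threshold), so your treatment is no less rigorous on this point.
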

\begin{proof}
By symmetry, we only show the proof for $\wt{I}^0$ here. The arguments for $\wt{J}^0$ are similar.
On the one hand, we have
\begin{align*}
\Prob(I^0_-\nsubseteq \wt{I}^0) 
& \leq \sum_{i\in I^0_-} \Prob\pth{\norm{\bfX_{i*}}^2 < n + \alpha\sqrt{n\log{m}}} \\
& \leq m\, \Prob\pth{\chi^2_n(a_+\alpha\sqrt{n\log{m}}) < n + \alpha\sqrt{n\log{m}}} \\
& \leq m\, \exp\pth{-\frac{(a_+-1)^2 \alpha^2 n\log{m}}{4n + 8a_+\alpha\sqrt{n\log{m}}}}\\
& \leq m\, \exp(-3\log m) = m^{-2}.
\end{align*}
Here, the last inequality holds for fixed $a_+\geq 2$, $\alpha \geq 4$ and all sufficiently large $(m,n)$ such that $2a_+\alpha\sqrt{n\log{m}}\leq n/3$, which is guaranteed by \prettyref{cond:asymp}.

On the other hand, for $x = \frac{(1-a_-)^2 \alpha^2 n\log{m}}{(2.1)^2(n+2a_-\sqrt{n\log{m}})}$, we have
\begin{align*}
\Prob(\wt{I}^0\nsubseteq I_+^0) & \leq \sum_{i\in (I_+^0)^c}\Prob\pth{\norm{\bfX_{i*}}^2 >n+\alpha\sqrt{n\log m}}\\
& \leq m\, \Prob\pth{\chi^2_n(a_-\alpha\sqrt{n\log{m}}) > n + \alpha\sqrt{n\log{m}}} \\
& \leq m\, \Prob\pth{\chi^2_n(a_-\alpha\sqrt{n\log{m}}) > n + 2.1\sqrt{(n+2\alpha_-\sqrt{n\log{m}})\,x}}\\
& \leq m\, \Prob\pth{\chi^2_n(a_-\alpha\sqrt{n\log{m}}) > n + 2\sqrt{(n+2\alpha_-\sqrt{n\log{m}})x} + 2x}\\
& \leq m\, \exp\pth{-x}\\
& \leq m \exp(-3\log{m}) = m^{-2}.
\end{align*}
Here, the fourth inequality holds for fixed $\alpha \geq 4$, $a_-\leq \frac{1}{20}$, and all sufficiently large $(m,n)$ such that $n+2a_-\sqrt{n\log{m}} \geq \frac{2}{0.21}(1-a_-)^2\alpha\sqrt{n\log{m}}$. 
The last inequality holds when, in addition, $0.95^2 \cdot 16\cdot n \geq 3\cdot (2.1)^2\cdot (n+2a_-\sqrt{n\log{m}})$, which is again guaranteed by \prettyref{cond:asymp}.

Finally, when $I_-\subseteq \wt{I}^0 \subseteq I_+$, we have $I^0 = \wt{I}^0$ since $I_+\subset I$.
\end{proof}

The next lemma estimates the accuracy of the starting point $\wt\bfV^{(0)}$ for the oracle version of \prettyref{algo:IT}.

\begin{lemma}
\label{lmm:initial}
Let \prettyref{cond:asymp} be satisfied, and let $\alpha\geq 4$ and $a_+\geq 2$ be fixed constants. 
For sufficiently large values of $m$ and $n$, uniformly over $\calF(m,n,k,l,r,d,\kappa)$,
with probability at least $1 - O(m^{-2})$, for a positive constant $C$ that depends only on $\kappa, a_+$ and $\alpha$,
\[
\fnorm{\sin\wt\Theta_v^{(0)}} \leq 
\frac{C}{d}\qth{\pth{r^2 k^2 n\log{m}}^{1/4} + \pth{r^2 l^2 m\log{m}}^{1/4}}
\leq \frac{1}{6}.
% \nb{const \leq 1}.
% \frac{C}{d}\pth{\sqrt{k\sqrt{n\log{m}}} + \sqrt{rk+\log{m}}} \leq \mbox{\nb{fill in const!}}.
\]
\end{lemma}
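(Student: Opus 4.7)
The plan is to view $\wt\bfV^{(0)}$ as the top $r$ right singular vectors of the oracle matrix $\bfX^{(0)}$, interpret $\bfX^{(0)}$ as a perturbation $\wt\bfX - \bfE$ of $\wt\bfX$ (whose right singular span is exactly $\sp(\wt\bfV)$), and then control $\fnorm{\sin\wt\Theta_v^{(0)}}$ via Wedin's $\sin\Theta$ theorem. Throughout, I condition on the high-probability events provided by \prettyref{lmm:oracle} and \prettyref{lmm:initial-set}; in particular, $I_-\subseteq \wt{I}^0\subseteq I_+$ and $J_-\subseteq \wt{J}^0\subseteq J_+$, events that hold jointly with probability $1-O(m^{-2})$.

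The perturbation $\bfE = \wt\bfX - \bfX^{(0)}$ is supported on $(I\times J)\setminus(\wt{I}^0\times \wt{J}^0)$, which I split as $\bfE_1+\bfE_2$: $\bfE_1$ carries the rows indexed by $I\setminus \wt{I}^0$ (over all $l$ columns of $J$), and $\bfE_2$ carries the columns indexed by $J\setminus \wt{J}^0$ restricted to rows in $\wt{I}^0$. Writing $\bfE_i = \bfM_i + \bfZ_i$, on the signal side any $i\in I\setminus\wt{I}^0\subseteq I\setminus I_-$ satisfies $\norm{\bfM_{i*}}^2 < a_+\alpha\sqrt{n\log m}$, so
\begin{equation*}
\opnorm{\bfM_1}^2 \leq \fnorm{\bfM_1}^2 \leq k\,a_+\alpha\sqrt{n\log m},\qquad \opnorm{\bfM_2}^2 \leq l\,a_+\alpha\sqrt{m\log m}.
\end{equation*}
On the noise side, both $\bfZ_i$ are (zero-padded) submatrices of the deterministic-index block $\bfZ_{IJ}$, so Davidson--Szarek gives $\opnorm{\bfZ_i}\leq \sqrt{k}+\sqrt{l}+2\sqrt{\log m}$ with probability $\geq 1-m^{-2}$. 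Absorbing the lower-order $\sqrt{k}+\sqrt{l}+\sqrt{\log m}$ into the dominant terms using \prettyref{cond:asymp} gives
\begin{equation*}
\opnorm{\bfE}^2 \leq C\pth{k\sqrt{n\log m} + l\sqrt{m\log m}}.
\end{equation*}

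Now I apply Wedin's $\sin\Theta$ theorem with $A=\wt\bfX$ and $B=\bfX^{(0)} = A-\bfE$. From \prettyref{lmm:oracle}, $\wt d_r\geq d-o(d)$ and $\sigma_{r+1}(\wt\bfX)\leq \opnorm{\wt\bfZ}=o(d)$; combined with Weyl's inequality and the last clause of \prettyref{cond:asymp} (which also forces $\opnorm{\bfE}=o(d)$), the singular-value gap $\wt d_r-\sigma_{r+1}(\bfX^{(0)})$ is at least $d/4$ for large $m,n$. Hence $\opnorm{\sin\wt\Theta_v^{(0)}}\leq C\opnorm{\bfE}/d$, and since $\sin\wt\Theta_v^{(0)}$ is an $r\times r$ diagonal,
\begin{equation*}
\fnorm{\sin\wt\Theta_v^{(0)}}\leq \sqrt{r}\,\opnorm{\sin\wt\Theta_v^{(0)}} \leq \frac{C'}{d}\sqrt{r\pth{k\sqrt{n\log m}+l\sqrt{m\log m}}},
\end{equation*}
which matches the claimed rate $\tfrac{C}{d}[(r^2k^2n\log m)^{1/4}+(r^2l^2m\log m)^{1/4}]$. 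The bound $\leq 1/6$ then follows from the last clause of \prettyref{cond:asymp} by choosing its constant $c'$ sufficiently small.

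The main obstacle is controlling $\opnorm{\bfE}$ rather than $\fnorm{\bfE}$: a naive Frobenius bound on the noise gives a $kl$ term that is too large relative to $k\sqrt{n\log m}+l\sqrt{m\log m}$. The key is precisely the split into two pieces that are submatrices of $\bfZ_{IJ}$, so Davidson--Szarek can be invoked to produce operator-norm noise bounds of order $\sqrt{k}+\sqrt{l}+\sqrt{\log m}$, while the signal contribution is handled bluntly via Frobenius norm using the thresholds defining $I_-$ and $J_-$.
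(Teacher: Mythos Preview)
Your proof is correct and follows essentially the same route as the paper: both apply Wedin's $\sin\Theta$ theorem to the perturbation $\wt\bfX-\bfX^{(0)}$, split it into a row block on $(I\setminus\wt I^0)\times J$ and a column block on $\wt I^0\times(J\setminus\wt J^0)$, bound the signal part via the Frobenius norm using the threshold defining $I_-,J_-$, and bound the noise part via Davidson--Szarek on $\bfZ_{IJ}$. The only cosmetic differences are that the paper factors the signal as $\bfU\bfD\bfV'$ and bounds $\fnorm{\bfU_{I\setminus\wt I^0,*}}$ rather than $\fnorm{\bfM_1}$ directly, and uses $\wt d_r$ as the Wedin denominator without explicitly invoking Weyl on $\sigma_{r+1}(\bfX^{(0)})$.
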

\begin{proof}
Let $\bfX^{(0)}$ be the matrix defined in Step \ref{algo:step:svd} of \prettyref{algo:init}, but with $I^0$ and $J^0$ replaced by $\wt{I}^0$ and $\wt{J}^0$.
Then we have
\[
\fnorm{\sin\wt\Theta_v^{(0)}} = \frac{1}{\sqrt{2}}\fnorm{\wt\bfV^{(0)}\wt\bfV^{(0)} - \wt\bfV\wt\bfV'}
\leq \frac{\sqrt{2r}}{\sqrt{2}}\opnorm{\wt\bfV^{(0)}\wt\bfV^{(0)} - \wt\bfV\wt\bfV'} 
\leq \frac{\sqrt{r}}{\wt{d}_r}\opnorm{\wt\bfX - \wt\bfX^{(0)}}.
\]
Here, the first equality is from \eqref{eq:sin-theta}. 
The second inequality holds since $\rank(\wt\bfV^{(0)}\wt\bfV^{(0)} - \wt\bfV\wt\bfV') \leq 2r$, and the last inequality is due to Wedin's sin$\theta$ theorem \citep{Wedin72}.

To further bound the rightmost side, we note that $\wt\bfX^{(0)}$ and $\wt\bfX$ are supported on $\wt{I}^0 \times \wt{J}^0$ and $I\times J$ respectively, with $\wt{I}^0 \times \wt{J}^0 \subset I\times J$.
In addition, $(I\times J)\backslash(\wt{I}^0 \times \wt{J}^0)$ is the union of two disjoint subsets $(I\backslash \wt{I}^0)\times J$ and $\wt{I}^0 \times (J\backslash \wt{J}^0)$.
Thus, the triangle inequality leads to
\begin{align}
\opnorm{\wt\bfX - \wt\bfX^{(0)}} & 
\leq \opnorm{\wt\bfX_{I\backslash \wt{I}^0, J}} + \opnorm{\wt\bfX_{\wt{I}^0, J\backslash \wt{J}^0}}  \nonumber \\
& \leq \opnorm{\bfU_{I\backslash \wt{I}^0, *}\bfD(\bfV_{J*})'} + 
\opnorm{\bfU_{\wt{I}^0 *} \bfD (\bfV_{J\backslash \wt{J}^0 *})'} + 
\opnorm{\bfZ_{I\backslash \wt{I}^0, J}} + 
\opnorm{\bfZ_{\wt{I}^0, J\backslash \wt{J}^0}}. 
\label{eq:initial-X-decomp}
\end{align}
We now bound each of the four terms in \eqref{eq:initial-X-decomp} separately. 
For the first term, on the event such that the conclusion of \prettyref{lmm:initial-set} holds, we have
\begin{align*}
\opnorm{\bfU_{I\backslash \wt{I}^0, *}\bfD\bfV_{J}'} \leq 
\opnorm{\bfD}\opnorm{\bfV_{J*}}\opnorm{\bfU_{I\backslash \wt{I}^0, *}}
\leq d_1 \fnorm{\bfU_{I\backslash \wt{I}^0, *}}
\leq \frac{d_1}{d_r} (a_+\alpha)^{1/2} (k^2 n\log{m})^{1/4}.
\end{align*}
Here, the last inequality is due to $I_-^0\subset \wt{I}^0$, the definition of $I_-^0$ in \eqref{eq:I0pm}, and the facts that $\norm{\bfM_{i*}}\geq d_r \norm{\bfU_{i*}}$ for all $i\in [m]$ and that $|I\backslash \wt{I}^0|\leq |I|\leq k$.
By similar argument, on the event such that the conclusion of \prettyref{lmm:initial-set} holds, we can bound the second term in \eqref{eq:initial-X-decomp} as
\begin{align*}
\opnorm{\bfU_{\wt{I}^0 *} \bfD (\bfV_{J\backslash \wt{J}^0 *})'} 
&\leq \opnorm{\bfU_{\wt{I}^0 *}}\opnorm{\bfD} \opnorm{\bfV_{J\backslash \wt{J}^0 *}} \leq d_1 \opnorm{\bfU} \fnorm{\bfV_{J\backslash \wt{J}^0 *}}\\
& \leq \frac{d_1}{d_r} (a_+\alpha)^{1/2} (l^2 m\log{m})^{1/4}.
\end{align*}
To bound the last two terms, we first note that on the event such that the conclusion of \prettyref{lmm:initial-set} holds, both terms are upper bounded by $\opnorm{\bfZ_{IJ}}$. 
Together with the Davidson--Szarek bound \citep{Davidson01}, this implies that with probability at least $1- m^{-2}$,
\[
\opnorm{\bfZ_{I\backslash \wt{I}^0, J}} + 
\opnorm{\bfZ_{\wt{I}^0, J\backslash \wt{J}^0}} \leq 2\opnorm{\bfZ_{IJ}} 
\leq 2\pth{\sqrt{k} + \sqrt{l} + 2\sqrt{\log{m}}}.
\]
Assembling the last five displays and observe that $\wt{d}_r \geq 0.9 d_r$ for sufficiently large values of $(m,n)$ on the event such that the conclusion of \prettyref{lmm:oracle}, we obtain the first inequality in the conclusion. 
The second inequality is a direct consequence of \prettyref{cond:asymp}.
This completes the proof.
% \nb{still need bound on $\wt{d}_r$}
\end{proof}

\paragraph{Evolution}
We now study how the column subspaces of $\wt\bfU^{(t)}$ and $\wt\bfV^{(t)}$ evolve over iterations.
% Let the $r\Th$ and $(r+1)\Th$ singular values of $\wt\bfX$ be $\wt{d}_r$ and $\wt{d}_{r+1}$. 
To this end, let
\begin{equation}
\label{eq:rho}
\rho = \wt{d}_{r+1} / \wt{d}_r,
\end{equation}
where $\wt{d}_i$ denotes the $i\Th$ singular value of $\wt\bfX$.

\begin{proposition}
\label{prop:iter}
For any $t\geq 1$, let $x^t = \fnorm{\sin\Theta_u^{(t)}}$, $y^t = \fnorm{\sin\Theta_v^{(t)}}$. 
Moreover, define
\begin{align}
	\label{eq:omega}
\omega_u = (2\wt{d}_r)^{-1}\sqrt{k\gamma_u^2 }, \qquad 
\omega_v = (2\wt{d}_r)^{-1}\sqrt{l\gamma_v^2 },\qquad
\omega = \omega_u \vee \omega_v.
\end{align}
Let \prettyref{cond:asymp} be satisfied.
Then for sufficiently large values of $(m,n)$, on the event such that the conclusions of Lemmas \ref{lmm:oracle}--\ref{lmm:initial} hold,
\begin{enumerate}[1)]
\item 	For any $t\geq 1$, if $y^{t-1} < 1$, then
\begin{align}
	\label{eq:iter}
x^t \sqrt{1 - (y^{t-1})^2 } & \leq \rho y^{t-1} + \omega_u,\qquad
y^t \sqrt{1 - (x^t)^2} \leq \rho x^t + \omega_v.
\end{align}

\item For any $a\in (0,1/2]$, if
\begin{align}
	\label{eq:iter-conv}
	y^{t-1} \leq \frac{1.01\, \omega}{(1-a)(1-\rho)},
\end{align}
then so is $x^t$. Otherwise, 
\begin{align}
\label{eq:iter-contract}
x^t \leq y^{t-1}[1-a(1-\rho)].
\end{align}
The same conclusions hold with the ordered pair $(y^{t-1}, x^t)$ replaced by $(x^t, y^t)$ in \eqref{eq:iter-conv}--\eqref{eq:iter-contract}.
\end{enumerate}
\end{proposition}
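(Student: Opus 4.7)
The plan is to trace the oracle iteration $\wt\bfV^{(t-1)} \to \wt\bfU^{(t),\mul} \to \wt\bfU^{(t),\thr} \to \wt\bfU^{(t)}$ step by step, using the oracle SVD \eqref{eq:X-oracle-svd} to decouple ``signal'' and ``noise'' and then controlling how thresholding perturbs the column span. By symmetry, the two inequalities in \eqref{eq:iter} follow from the same argument with $\bfU$ and $\bfV$ swapped, so I focus on the first.

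For the multiplication step, I would substitute the oracle SVD to obtain the orthogonal decomposition
\[
\wt\bfU^{(t),\mul} = \wt\bfU \wt\bfD C + \wt\bfU_\perp \wt\bfD_\perp S,\qquad C := \wt\bfV'\wt\bfV^{(t-1)},\qquad S := \wt\bfV_\perp'\wt\bfV^{(t-1)},
\]
noting $\fnorm{S} = y^{t-1}$ and $\sigma_{\min}(C) \geq \sqrt{1-(y^{t-1})^2}$. For the thresholding step, since every nonzero row of $\wt\bfU^{(t),\mul}$ is supported on $I$ and $\eta$ satisfies \eqref{eq:eta}, the perturbation $E := \wt\bfU^{(t),\thr} - \wt\bfU^{(t),\mul}$ satisfies $\fnorm{E}^2 \leq k\gamma_u^2$. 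Splitting $\wt\bfU^{(t),\thr} = \wt\bfU A_1 + \wt\bfU_\perp A_2$ with $A_1 = \wt\bfD C + \wt\bfU'E$ and $A_2 = \wt\bfD_\perp S + \wt\bfU_\perp'E$, the QR step yields $\wt\bfU_\perp'\wt\bfU^{(t)} = A_2(R_u^{(t)})^{-1}$ and hence
\[
x^t = \fnorm{\wt\bfU_\perp'\wt\bfU^{(t)}} \leq \frac{\fnorm{A_2}}{\sigma_r(\wt\bfU^{(t),\thr})} \leq \frac{\wt{d}_{r+1}y^{t-1} + \sqrt{k}\gamma_u}{\wt{d}_r\sqrt{1-(y^{t-1})^2} - \sqrt{k}\gamma_u},
\]
where the denominator uses $(\wt\bfU^{(t),\thr})'\wt\bfU^{(t),\thr} = A_1'A_1 + A_2'A_2 \succeq A_1'A_1$, so $\sigma_r \geq \sigma_r(A_1) \geq \wt{d}_r\sqrt{1-(y^{t-1})^2} - \sqrt{k}\gamma_u$. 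Cross-multiplying and absorbing the resulting $x^t\sqrt{k}\gamma_u/\wt{d}_r$ cross term back into $\omega_u$ (using that $x^t$ is a priori bounded) produces the first inequality in \eqref{eq:iter}.

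Part 2) is then purely algebraic. If the noise term $\omega \geq \omega_u$ dominates $\rho y^{t-1}$ in the right-hand side of \eqref{eq:iter}, then $x^t$ lands automatically in the small regime \eqref{eq:iter-conv}; otherwise, using $\omega \leq (1-a)(1-\rho) y^{t-1}/1.01$ and dividing through by $\sqrt{1-(y^{t-1})^2}$ (which stays close to $1$, inductively maintained from the initialization bound $y^0 \leq 1/6$ in \prettyref{lmm:initial}) yields the geometric contraction \eqref{eq:iter-contract}. The symmetric claim for $(x^t, y^t)$ follows by applying the same dichotomy to the second inequality in \eqref{eq:iter}. The hard part will be the bookkeeping in the last step of the derivation of \eqref{eq:iter}: the denominator bound $\wt{d}_r\sqrt{1-(y^{t-1})^2} - \sqrt{k}\gamma_u$ introduces a cross term after cross-multiplication that must be cleanly absorbed into $\omega_u = \sqrt{k\gamma_u^2}/(2\wt{d}_r)$; the factor of $2$ there is tuned precisely for this absorption, and the threshold choice \eqref{eq:thr} together with \prettyref{cond:asymp} ensures $\sqrt{k}\gamma_u \ll \wt{d}_r$ so that the denominator stays bounded away from zero throughout the iteration.
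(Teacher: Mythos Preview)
Your approach is correct and reaches the same conclusion, but it is organized differently from the paper's proof. The paper splits the step into two pieces: it first introduces the intermediate angle $u^t = \fnorm{\sin\Phi_u^{(t)}}$ between $\sp(\wt\bfU^{(t),\mul})$ and $\sp(\wt\bfU)$ and shows, via the QR factorization of $\wt\bfU^{(t),\mul}$ and the SVD of $\wt\bfX$, that $u^t\sqrt{1-(y^{t-1})^2}\leq \rho y^{t-1}$; it then applies Wedin's $\sin\Theta$ theorem to the perturbation $\wt\bfU^{(t),\thr}-\wt\bfU^{(t),\mul}$ with denominator $\sigma_r(\wt\bfU^{(t),\mul})\geq \wt d_r\sqrt{1-(y^{t-1})^2}$, yielding $x^t\leq u^t+\omega_u/\sqrt{1-(y^{t-1})^2}$ and hence \eqref{eq:iter} with exactly $\omega_u$ on the right. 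Your one-shot decomposition instead uses $\sigma_r(\wt\bfU^{(t),\thr})$ in the denominator, which forces the subtraction $-\sqrt{k}\gamma_u$ and the subsequent cross-term absorption; this works, but the constant you recover is a small multiple of $\omega_u$ rather than $\omega_u$ itself (your remark that ``the factor of $2$ is tuned precisely for this absorption'' does not quite pan out: cross-multiplying gives $2\omega_u(1+x^t)$, not $\omega_u$). The paper's separation into multiplication-then-thresholding is what buys the clean constant. One further small point: the matrix $\wt\bfU_\perp$ from \eqref{eq:X-oracle-svd} is only $m\times(n-r)$, not a full orthogonal complement of $\wt\bfU$ in $\reals^m$, so the identity $x^t=\fnorm{\wt\bfU_\perp'\wt\bfU^{(t)}}$ and the decomposition $\wt\bfU^{(t),\thr}=\wt\bfU A_1+\wt\bfU_\perp A_2$ need a completion of $\wt\bfU_\perp$ to make sense; this is cosmetic and does not change your bounds. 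For part 2), both proofs reduce to the same algebraic dichotomy (the paper cites \cite{Ma11} for it), so your treatment is fine.
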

\begin{proof}
% \nb{summarize the major steps}.
1) In what follows, we focus on showing the first inequality in \eqref{eq:iter}. The second inequality follows from essentially the same argument.

Let $u^t = \fnorm{\sin\Phi_u^{(t)}}$. We first show that 
\begin{equation}
	\label{eq:iter-mul}
u^t \leq \frac{\rho y^{t-1}}{\sqrt{1-(y^{t-1})^2}}.	
\end{equation}
Recall the SVD of $\wt\bfX$ in \eqref{eq:X-oracle-svd}.
% To this end, let the SVD of $\wt\bfX$ be 
% \begin{align*}
% \wt\bfX = \begin{bmatrix}
% \wt\bfU & \wt\bfU_\perp
% \end{bmatrix}
% \begin{bmatrix}
% \wt\bfD & \bszero\\ \bszero & \wt\bfD_\perp
% \end{bmatrix}
% \begin{bmatrix}
% \wt\bfV'\\ (\wt\bfV_\perp)'
% \end{bmatrix}.
% \end{align*}
In addition, let the QR factorization of $\wt\bfU^{(t),\mul} = \wt\bfQ^{(t)} \wt\bfR^{(t),\mul}$.
By definition, $\wt\bfU^{(t),\mul} = \wt\bfX \wt\bfV^{(t-1)}$.
Premultiplying both sides by $\begin{bmatrix}
\wt\bfU & \wt\bfU_\perp
\end{bmatrix}'$, we obtain
\begin{align*}
\begin{bmatrix}\wt\bfD & \bszero\\ \bszero & \wt\bfD_\perp \end{bmatrix}
\begin{bmatrix}
\wt\bfV' \wt\bfV^{(t-1)} \\ (\wt\bfV_\perp)' \wt\bfV^{(t-1)}
\end{bmatrix}
= 
\begin{bmatrix}
\wt\bfU' \wt\bfQ^{(t)} \\
(\wt\bfU_\perp)' \wt\bfQ^{(t)}
\end{bmatrix} 
\wt\bfR^{(t),\mul}.
\end{align*} 
In addition, let
\begin{align*}
\begin{bmatrix} \wt\bfU' \wt\bfQ^{(t)} \\ (\wt\bfU_\perp)' \wt\bfQ^{(t)}
\end{bmatrix} 
= 
\begin{bmatrix}
\bfO^{(t)} \\ \bfW^{(t)}
\end{bmatrix}.
\end{align*}
By the last two displays, we have
\begin{align*}
\bfW^{(t)} & = \wt\bfD_\perp (\wt\bfV_{\perp})'\wt\bfV^{(t-1)} (\wt\bfR^{(t),\mul})^{-1}
 = \wt\bfD_\perp \qth{(\wt\bfV_{\perp})'\wt\bfV^{(t-1)}} 
\qth{\wt\bfV' \wt\bfV^{(t-1)}}^{-1} \wt\bfD^{-1} \qth{\wt\bfU' \wt\bfQ^{(t)}}.
\end{align*}
Thus, 
\begin{align*}
\fnorm{\bfW^{(t)} } \leq \opnorm{\wt\bfD_\perp} \fnorm{(\wt\bfV_{\perp})'\wt\bfV^{(t-1)}} 
\opnorm{[\wt\bfV' \wt\bfV^{(t-1)}]^{-1}}
\opnorm{\wt\bfD^{-1}}
\opnorm{\wt\bfU} \opnorm{\wt\bfQ^{(t)}}.
\end{align*}
By Corollary 5.5.4 of \cite{Stewart90}, $\fnorm{\bfW^{(t)} } = u^t$, $\fnorm{(\wt\bfV_{\perp})'\wt\bfV^{(t-1)}} = y^{t-1}$. 
Moreover, by Section 12.4.3 of \cite{GolubLoan96}, $\opnorm{[\wt\bfV' \wt\bfV^{(t-1)}]^{-1}} = 1/\cos\theta_{v,r}^{(t-1)} = 1/\sqrt{1-(\sin\theta_{v,r}^{(t-1)})^2} \leq 1/\sqrt{1-(y^{t-1})^2}$. 
Here we have used the assumption that $y^{t-1}<1$.
Together with the facts that $\opnorm{\wt\bfD_\perp}=\wt{d}_{r+1}$, $\opnorm{\wt{\bfD}^{-1}} = \wt{d}_r^{-1}$, $\opnorm{\wt\bfU} = \opnorm{\wt\bfQ^{(t)}} = 1$, this leads to \eqref{eq:iter-mul}.

Next, we show that
\begin{equation}
\label{eq:iter-thr}
x^t \leq u^t + \frac{\omega_u}{\sqrt{1-(y^{t-1})^2}}.
\end{equation}
To this end, let 
$w^t = \fnorm{\wt\bfQ^{(t)}(\wt\bfQ^{(t)})' - \wt\bfU^{(t)}(\wt\bfU^{(t)})'}$.
Then, by \eqref{eq:sin-theta} and the triangle inequality, we obtain
\[
x^t\leq u^t + \frac{1}{\sqrt{2}}w^t.
\] 
To bound $w^t$, note that Wedin's sin$\theta$ theorem \citep{Wedin72} implies 
\begin{align*}
w^t \leq \frac{\fnorm{\wt\bfU^{(t),\mul} - \wt\bfU^{(t)}}}{\sigma_r(\wt\bfU^{(t),\mul})}.
\end{align*}
In the oracle version, $\wt\bfU^{(t),\mul}$ has at most $k$ nonzero rows, and so $\fnorm{\wt\bfU^{(t),\mul} - \wt\bfU^{(t)}} \leq \sqrt{k\gamma_u^2}$.
For any unit vector $\bfy\in \sp(\wt\bfV^{(t-1)})$, decompose $\bfy = \bfy_0 + \bfy_1$ where $\bfy_0\in \sp(\wt\bfV)$ and $\bfy_1\in \sp(\wt\bfV_\perp)$.
Then by definition, $\norm{\bfy_0} \geq \cos\theta_{v,1}^{(t-1)} \geq \sqrt{1-(y^{t-1})^2}$.
Thus, for any unit vector $\bfx$, 
$\norm{\wt\bfU^{(t),\mul}\bfx}^2 = \norm{\wt\bfX \bfV^{(t-1)}\bfx}^2 = \norm{\wt\bfX \bfy}^2 = \norm{\wt\bfX \bfy_0}^2 + \norm{\wt\bfX \bfy_1}^2 \geq \norm{\wt\bfX \bfy_0}^2 = \norm{\wt\bfX\wt\bfV \wt\bfV'\bfy_0}^2 \geq (\wt{d}_r)^2 \norm{\bfy_0}^2 \geq (\wt{d}_r)^2 [1 - (y^{t-1})^2]$.   
Hence,
\[
\sigma_r(\wt\bfU^{(t),\mul}) \geq \inf_{\norm{\bfx}=1}\norm{\wt\bfU^{(t),\mul}\bfx}
\geq \wt{d}_r \sqrt{1 - (y^{t-1})^2}.
\]
Assembling the last three display, we obtain \eqref{eq:iter-thr}.
Finally, the first inequality in \eqref{eq:iter} comes from \eqref{eq:iter-mul}, \eqref{eq:iter-thr} and the triangle inequality.

2) Given \eqref{eq:iter}, we have 
\[
x^t \leq \frac{\rho y^{t-1} + \omega}{\sqrt{1-(y^{t-1})^2}},
\]
and that $y^0\leq \frac{1}{6} \leq \frac{1}{5}(1-\rho)^2$ for sufficiently large values of $(m,n)$ due to \prettyref{cond:asymp} and \prettyref{lmm:oracle}.
The proof of part (2) then follows from the same argument as in the proof of Proposition 6.1 in \cite{Ma11}. 
\end{proof}

\paragraph{Convergence}

We say that the oracle sequence has \emph{converged} if 
\begin{align}
\label{eq:converge}
x^t \vee y^t \leq \frac{1.01 \omega}{(1-m^{-1})(1-\rho)}.
\end{align}
% \nb{add some discussion on this choice?}
This choice is motivated by the observation that $\frac{1.01\omega}{1-\rho}$ is the smallest possible value for $x^t$ and $y^t$ that \prettyref{prop:iter} can lead to.
\begin{proposition}
\label{prop:converge}
Let \prettyref{cond:asymp} be satisfied and $T$ be defined in \eqref{eq:T}.
For sufficiently large values of $(m,n)$, on the event such that the conclusions of Lemmas \ref{lmm:oracle}--\ref{lmm:initial} hold, it takes at most $T$ steps for the oracle sequence to converge in the sense of \eqref{eq:converge}. 
For any $t$, let $\wt\bfP_u^{(t)} = \wt\bfU^{(t)}(\wt\bfU^{(t)})'$ and
$\wt\bfP_v^{(t)} = \wt\bfV^{(t)}(\wt\bfV^{(t)})'$.
Then there exists a constant $C$ that depends only on $\kappa$, such that for all $t\geq T$,
\[
\fnorm{\wt\bfP_u^{(t)}\wt\bfX \wt\bfP_v^{(t)} - \wt\bfU \wt\bfD \wt\bfV}^2  \leq C\pth{k\gamma_u^2 + l\gamma_v^2}.
\]
\end{proposition}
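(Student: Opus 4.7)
The plan is to prove the two claims separately. For the convergence time, I would start from the initialization bound $y^0 = \fnorm{\sin\wt\Theta_v^{(0)}}\leq 1/6$ supplied by \prettyref{lmm:initial}, and then apply \prettyref{prop:iter} iteratively with $a=1/2$. In each step $t$ the pair of contractions $y^{t-1}\mapsto x^t\mapsto y^t$ either produces values inside the loose threshold $2.02\omega/(1-\rho)$ (after which all later iterates remain there, since part 2 of \prettyref{prop:iter} makes that set absorbing) or multiplies by a factor of $[(1+\rho)/2]^2$. Under \prettyref{cond:asymp} and \prettyref{lmm:oracle} one has $\wt{d}_r\geq 0.9\, d_r$ and $\wt{d}_{r+1}\leq \sqrt{k}+\sqrt{l}+2\sqrt{\log m}$, hence $\rho=\wt{d}_{r+1}/\wt{d}_r=o(1)$ and $(1+\rho)/2\leq 2^{-1/2}$ for large $(m,n)$. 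The geometric estimate $y^T\leq (1/6)[(1+\rho)/2]^{2T}$, together with the dominant $\log(d_r^2/(k\gamma_u^2\vee l\gamma_v^2))$ contribution to \eqref{eq:T} and the $(\log m)/\log 2$ slack, then forces $y^T$ below the tighter threshold $1.01\omega/[(1-m^{-1})(1-\rho)]$, and one further application of part 2 propagates this to $x^T$, yielding \eqref{eq:converge}.

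For the reconstruction error, the key observation is that $\wt\bfU\wt\bfD\wt\bfV' = \wt\bfP_u\wt\bfX\wt\bfP_v$, which follows from \eqref{eq:X-oracle-svd} together with $\wt\bfU'\wt\bfU_\perp = 0$ and $\wt\bfV_\perp'\wt\bfV = 0$. Adding and subtracting this quantity gives
\begin{equation*}
\wt\bfP_u^{(t)}\wt\bfX\wt\bfP_v^{(t)} - \wt\bfU\wt\bfD\wt\bfV' = (\wt\bfP_u^{(t)}-\wt\bfP_u)\wt\bfX\wt\bfP_v^{(t)} + \wt\bfP_u\wt\bfX(\wt\bfP_v^{(t)}-\wt\bfP_v).
\end{equation*}
For the first summand I would split $\wt\bfX = \wt\bfU\wt\bfD\wt\bfV' + \wt\bfU_\perp\wt\bfD_\perp\wt\bfV_\perp'$ and use the Frobenius identities $\fnorm{(\wt\bfP_u^{(t)}-\wt\bfP_u)\wt\bfU} = \fnorm{\sin\wt\Theta_u^{(t)}} = x^t$ (since $\wt\bfP_u\wt\bfU = \wt\bfU$) and $\fnorm{\wt\bfP_u^{(t)}\wt\bfU_\perp} = \fnorm{\wt\bfP_u^{(t)}(I-\wt\bfP_u)\wt\bfU_\perp} \leq \fnorm{\wt\bfP_u^{(t)}(I-\wt\bfP_u)}\opnorm{\wt\bfU_\perp} \leq x^t$, combined with Frobenius submultiplicativity, to conclude $\fnorm{(\wt\bfP_u^{(t)}-\wt\bfP_u)\wt\bfX\wt\bfP_v^{(t)}} \leq x^t(\wt{d}_1+\wt{d}_{r+1})$. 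A symmetric bound holds for the second summand with $y^t$ in place of $x^t$. Summing, squaring, and invoking the convergence bound $x^t\vee y^t = O(\omega)$ together with $\omega^2\wt{d}_1^2 \lesssim (\wt{d}_1/\wt{d}_r)^2(k\gamma_u^2\vee l\gamma_v^2)\leq \kappa^2(k\gamma_u^2+l\gamma_v^2)$ (transferring from $\wt{d}_i$ to $d_i$ via \prettyref{lmm:oracle}) then gives the advertised bound with $C=C(\kappa)$.

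The main obstacle is the quantitative bookkeeping in the first step: verifying that the prescribed $T$ in \eqref{eq:T} actually clears the tighter threshold $1.01\omega/[(1-m^{-1})(1-\rho)]$, rather than merely the relaxed $2.02\omega/(1-\rho)$ that a fixed $a=1/2$ delivers after one step. This requires carefully tracking constants through the geometric-decay estimate and invoking \prettyref{cond:asymp} to absorb the corrections produced by $\rho$ and by the gap between $\wt{d}_r$ and $d_r$; once that is done, the rest is a routine application of Frobenius submultiplicativity and canonical-angle identities.
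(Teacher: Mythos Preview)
Your treatment of the second claim is essentially the paper's argument: recognize $\wt\bfU\wt\bfD\wt\bfV'=\wt\bfP_u\wt\bfX\wt\bfP_v$, split via the triangle inequality, and bound each piece by $\wt d_1$ times a projection difference. Your SVD split into the $\wt\bfU$ and $\wt\bfU_\perp$ components is a bit more work than needed (the paper just uses $\fnorm{(\wt\bfP_u^{(t)}-\wt\bfP_u)\wt\bfX\wt\bfP_v^{(t)}}\leq\opnorm{\wt\bfX}\fnorm{\wt\bfP_u^{(t)}-\wt\bfP_u}$ directly), but it is correct.

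The first claim, however, has a real gap that you have correctly located but not resolved. With $a=1/2$ fixed, \prettyref{prop:iter} only guarantees that the iterates either contract or enter the absorbing set $\{y\leq 2.02\,\omega/(1-\rho)\}$; once inside, the dichotomy gives no further improvement. Your geometric estimate $y^T\leq (1/6)[(1+\rho)/2]^{2T}$ therefore fails the moment the sequence crosses that boundary, and no amount of constant-tracking or appeal to \prettyref{cond:asymp} closes the remaining factor of roughly $2$ between $2.02\,\omega/(1-\rho)$ and the required $1.01\,\omega/[(1-m^{-1})(1-\rho)]$. These are both $O(\omega)$, so your argument would suffice for the reconstruction bound, but it does \emph{not} establish convergence in the sense of \eqref{eq:converge} as the proposition demands.

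The paper's remedy is a staged tightening: set $m=2^\nu$ and successively apply \prettyref{prop:iter} with $a=1/2,1/4,\ldots,2^{-\nu}=m^{-1}$. Stage $i$ drives the iterates from the level $1.01\,\omega/[(1-2^{-(i-1)})(1-\rho)]$ down to $1.01\,\omega/[(1-2^{-i})(1-\rho)]$; the contraction rate $1-2^{-i}(1-\rho)$ slows by half at each stage, but the distance to cover also halves, so the additional steps needed satisfy $t_i-t_{i-1}\asymp 2^{i-1}[\log(1-2^{-i})-\log(1-2^{-(i-1)})]/(1-\rho)$. Summing this telescoping sequence over $i=2,\ldots,\nu$ contributes the $(\log m)/(2\log 2)$ term in $T$, while the initial stage with $a=1/2$ contributes the $\log(d_r^2/(k\gamma_u^2\vee l\gamma_v^2))$ term. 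This is the missing idea; without it the proof of the first claim is incomplete.
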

\begin{proof}
To prove the first claim, we rely on claim (2) of \prettyref{prop:iter}.
Without loss of generality, assume that $m = 2^\nu$ for some integer $\nu \geq 1$. 
So $\nu = \log{m}/\log{2}$.
Let $t_1$ be the number of iterations needed to ensure that $x^t\vee y^t \leq \frac{1.01 \omega}{(1-\frac{1}{2})(1-\rho)}$. 
Note that when \eqref{eq:iter-conv} does not hold, \eqref{eq:iter-contract} ensures that 
\begin{equation}
	\label{eq:iter-decay}
y^t \leq y^{t-1} [1-a(1-\rho)]^2, \quad x^t \leq x^{t-1} [1-a(1-\rho)]^2.	
\end{equation}
Thus, it suffices to have 
$\qth{1-\frac{1}{2}(1-\rho)}^{2t_1} \geq \frac{1.01 \omega}{(1-\frac{1}{2})(1-\rho)}$,
\ie, $2t_1 |\log(1-\frac{1}{2}(1-\rho))| \geq \log(1-\frac{1}{2})(1-\rho)/(1.01\omega)$. 
Since $|\log(1-x)|\geq x$ for all $x\in (0,1)$, it suffices to set
\begin{align*}
t_1 = \frac{1}{1-\rho}\log\frac{\frac{1}{2}(1-\rho)}{1.01\omega} = \frac{1+o(1)}{2}\log\pth{\frac{d_r^2}{k\gamma_u^2 \vee l\gamma_v^2}}.
\end{align*}
Next, let $t_2-t_1$ be the number of additional iterations needed to achieve $x^t\vee y^t \leq 1.01\omega / [(1-\frac{1}{4})(1-\rho)]^2$.
Before this is achieved, \eqref{eq:iter-decay} is satisfied with $a = \frac{1}{4}$.
So it suffices to have $[1-\frac{1}{4}(1-\rho)]^{2(t_2 - t_1)}\leq (1-\frac{1}{2})/(1-\frac{1}{4})$, which is guaranteed if
$t_2-t_1 \geq \frac{2}{1-\rho}[\log(1-\frac{1}{4})-\log(1-\frac{1}{2})]$.
Recursively, we define $t_i$ for $i=3,\dots,\nu$, such that $x^{t_i}, y^{t_i}\leq 1.01\omega / [(1-2^{-i})(1-\rho)]$.
Repeating the above argument shows that it suffices to have
$t_i-t_{i-1} = \frac{2^{i-1}}{1-\rho}[\log(1-2^{-i}) -\log(1-2^{-(i-1)})]$ for $i=3,\dots,\nu$.
Therefore, if we let
\begin{align*}
t_\nu - t_1 = \frac{\nu+1/2}{2(1-\rho)} = \frac{(1+o(1))\log m}{2\log 2}
\geq \sum_{i=1}^\nu \frac{2^{i-1}}{1-\rho}\qth{\log(1-2^{-i}) - \log(1-2^{-(i-1)})},
\end{align*}
then $x^t\vee y^t \leq 1.01\omega/[(1-m^{-1})(1-\rho)]$ for all $t \geq t_\nu$.
We complete the proof of the first claim by noting that $T\geq t_\nu$ for sufficiently large $m$, $n$ under \prettyref{cond:asymp}.
	
To prove the second claim, let $\wt\bfP_u = \wt\bfU\wt\bfU'$ and $\wt\bfP_v = \wt\bfV\wt\bfV'$. 
Then we have
\begin{align}
\fnorm{\wt\bfP_u^{(t)}\wt\bfX \wt\bfP_v^{(t)} - \wt\bfU \wt\bfD \wt\bfV}
& = \fnorm{\wt\bfP_u^{(t)}\wt\bfX \wt\bfP_v^{(t)} - \wt\bfP_u \wt\bfX \wt\bfP_v} \label{eq:conv-bd-1}\\
& \leq \fnorm{(\wt\bfP_u^{(t)} - \wt\bfP_u)\wt\bfX \wt\bfP_v^{(t)}}
+ \fnorm{\wt\bfP_u \wt\bfX (\wt\bfP_v^{(t)} - \wt\bfP_v)} \nonumber\\
& \leq \fnorm{\wt\bfP_u^{(t)} - \wt\bfP_u} \opnorm{\wt\bfX} \opnorm{\wt\bfP_v^{(t)}} + \fnorm{\wt\bfP_v^{(t)} - \wt\bfP_v} \opnorm{\wt\bfX} \opnorm{\wt\bfP_u} \nonumber \\
& = \wt{d}_1 \pth{\fnorm{\wt\bfP_u^{(t)} - \wt\bfP_u} + \fnorm{\wt\bfP_v^{(t)} - \wt\bfP_v}} \label{eq:conv-bd-2}\\
& \leq C\pth{\sqrt{k\gamma_u^2} + \sqrt{l\gamma_v^2}}. \label{eq:conv-bd-3}
\end{align}
Here, the equality \eqref{eq:conv-bd-1} is due to the definitions of $\wt\bfP_u$, $\wt\bfP_v$ and the fact that $\wt\bfU$, $\wt\bfD$ and $\wt\bfV$ consist of the first $r$ singular values and vectors of $\wt\bfX$. 
The equality \eqref{eq:conv-bd-2} holds since $\opnorm{\wt\bfX} = \wt{d}_1$ and $\opnorm{\wt\bfP_u} = \opnorm{\wt\bfP_v^{(t)}} = 1$ as both are projection matrices.
Finally, the inequality \eqref{eq:conv-bd-3} holds since $\fnorm{\wt\bfP_u^{(t)} - \wt\bfP_u} = \sqrt{2} x^t$ and $\fnorm{\wt\bfP_v^{(t)} - \wt\bfP_v} = \sqrt{2} y^t$ due to \eqref{eq:sin-theta}, the definitions in \eqref{eq:omega} and \eqref{eq:converge}, and the fact that on the event such that \eqref{eq:oracle-val} holds, $\wt{d}_1 / \wt{d}_r \leq 2 \kappa$ when $m$ and $n$ are sufficiently large. 
This completes the proof.
\end{proof}

\begin{remark}
It is worth noting that the conclusions of \prettyref{prop:iter} and \prettyref{prop:converge} hold for any $\gamma_u > 0$ and $\gamma_v > 0$, though they will be used later with the specific choice of $\gamma_u$ and $\gamma_v$ in \eqref{eq:thr}.
\end{remark}

\paragraph{Proof of Upper Bounds}

We are now in the position to prove \prettyref{thm:upper}. To this end, we need to establish the equivalence between the oracle and the actual estimating sequences.
The following lemma shows that with high probability, the oracle sequence and the actual sequence are identical up to $3T$ iterates.

\begin{lemma}
\label{lmm:oracle-id}
Let $\gamma_u$ and $\gamma_v$ be defined as in \eqref{eq:thr} with some fixed constant $\beta\geq 4$ and let \prettyref{cond:asymp} be satisfied.
For sufficiently large $m$ and $n$, with probability at least $1-O(m^{-2})$, for all $1\leq t\leq 3T$, 
$\bfU^{(t)}_{I^c *} = \bszero$, $\bfV^{(t)}_{J^c *} = \bszero$, 
and so $\bfU^{(t)} = \wt\bfU^{(t)}$ and $\bfV^{(t)} = \wt\bfV^{(t)}$.
\end{lemma}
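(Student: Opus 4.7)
The plan is an induction on $t$ on a single good event, sustaining the joint property that $\bfU^{(t)}=\wt\bfU^{(t)}$, $\bfV^{(t)}=\wt\bfV^{(t)}$, and both iterates are row-supported in $I$ and $J$. \prettyref{lmm:initial-set} supplies the base case $t=0$: on the event $\{I^0=\wt I^0,\,J^0=\wt J^0\}$, which has probability $1-O(m^{-2})$, the input $\bfX^{(0)}$ to the SVD step of \prettyref{algo:init} coincides with its oracle counterpart, so $\bfU^{(0)}=\wt\bfU^{(0)}$ and $\bfV^{(0)}=\wt\bfV^{(0)}$ with the claimed supports.

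For the inductive step, because $\wt\bfV^{(t-1)}$ is supported in $J$, we have $\bfU^{(t),\mul}_{i*}=\bfX_{iJ}\wt\bfV^{(t-1)}_{J*}$ for every $i\in[m]$. Rows $i\in I$ satisfy $\bfX_{iJ}=\wt\bfX_{iJ}$ and hence match the oracle automatically; the task reduces to showing that for $i\in I^c$ the threshold $\gamma_u$ kills the row. Since $\bfM_{i*}=\bszero$ when $i\in I^c$, the row becomes $\bfZ_{iJ}\wt\bfV^{(t-1)}_{J*}$, and because $\wt\bfV^{(t-1)}_{J*}$ has orthonormal columns, its squared norm is $\chi^2_r$ conditional on $\wt\bfV^{(t-1)}$ (once the required independence is secured, see below). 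The choice $\gamma_u^2=1.01(r+2\sqrt{r\beta\log m}+2\beta\log m)$ combined with the Laurent--Massart tail then yields $\Prob\{\|\bfU^{(t),\mul}_{i*}\|>\gamma_u\}\le m^{-\beta}$ for each $(i,t)$. A union bound over $i\in I^c$ (at most $m$ values) and $t\in\{1,\ldots,3T\}$ aggregates to a failure probability of at most $3Tm\cdot m^{-\beta}$; since \prettyref{cond:asymp} gives $\log d\le cm$ and hence $T=O(m)$, this stays at $O(m^{-2})$ once $\beta\ge 4$. On the good event, the thresholding rule \eqref{eq:eta} zeros out every row of $\bfU^{(t),\thr}$ indexed by $I^c$, so $\bfU^{(t),\thr}=\wt\bfU^{(t),\thr}$; the QR step inherits the equality (under the standard positive-diagonal convention), and the symmetric left-to-right half of the iteration handles $\bfV^{(t)}$, closing the induction.

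The main obstacle is justifying the $\chi^2_r$ claim. A priori $\wt\bfV^{(t-1)}$ is built using $\wt I^0$ and $\wt J^0$, whose definitions involve the column norms $\|\bfX_{*j}\|^2$ for $j\in J$; these in turn contain $\bfZ_{iJ}$ for every $i\in I^c$, so $\wt\bfV^{(t-1)}$ is not strictly independent of the very rows whose tails we want to control. The plan for circumventing this is to condition on a realisation of $(\wt I^0,\wt J^0)$ inside the deterministic sandwich $I^0_-\subseteq\wt I^0\subseteq I^0_+$, $J^0_-\subseteq\wt J^0\subseteq J^0_+$ guaranteed by \prettyref{lmm:initial-set}, and exploit the fact that each individual row contributes only $O(1)$ to a column norm whose selection margin is $\alpha\sqrt{n\log m}$: the conditional law of $\bfZ_{iJ}$ then deviates negligibly from $N(0,\bfI_{|J|})$, the distortion being absorbed by the $1.01$-slack built into $\gamma_u^2$. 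This, together with the requirement that $T=O(m)$ forces $\beta\ge 4$, constitutes the main technical content of the argument.
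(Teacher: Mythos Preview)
Your overall plan—induction on $t$, using \prettyref{lmm:initial-set} for the base case, reducing the inductive step to the event $\{\|\bfZ_{iJ}\wt\bfV^{(t-1)}_{J*}\|<\gamma_u\}$ for $i\in I^c$, and closing with a $\chi^2_r$ tail bound plus union bound over $i$ and $t\le 3T$ with $T=O(m)$—is the same as the paper's, and your identification of the dependence between $\bfZ_{iJ}$ and $\wt\bfV^{(t-1)}$ (through the selection of $\wt J^0$) as the crux is exactly right.

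Where you diverge from the paper is in how you propose to break that dependence. Your suggestion is to condition on $(\wt I^0,\wt J^0)$ and argue that the conditional law of $\bfZ_{iJ}$ is close enough to $N(0,\bfI_{|J|})$ because a single row perturbs a column norm by an amount negligible relative to the selection margin. This heuristic is plausible, but you have not indicated how to make it rigorous: the conditioning event couples $\bfZ_{IJ}$ and $\bfZ_{I^c J}$ through the column-norm thresholds, so after conditioning $\wt\bfV^{(t-1)}$ (a function of $\bfZ_{IJ}$) and $\bfZ_{iJ}$ are not obviously independent, and quantifying the distortion in the conditional law of $\bfZ_{iJ}$ would require a nontrivial change-of-measure or leave-one-out argument that you have not supplied.

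The paper avoids this entirely by exploiting exact rotational invariance rather than approximate conditional independence. The key observation is that $\wt\bfV^{(t-1)}$ depends on $\bfZ_{I^c J}$ \emph{only} through the column norms $\{\|\bfZ_{I^c j}\|:j\in J\}$, and by Gaussian spherical symmetry the normalized vector $(Z_{ij}/\|\bfZ_{I^c j}\|)_{i\in I^c}$ is exactly independent of $\|\bfZ_{I^c j}\|$, hence of $\wt\bfV^{(t-1)}$. One then introduces independent $\chi_{|I^c|}$ variables $Y_j$ and sets $\check Z_{ij}=Y_j\,Z_{ij}/\|\bfZ_{I^c j}\|$, so that $\check\bfZ_{i[l]}$ is a genuine standard Gaussian row independent of $\wt\bfV^{(t-1)}$, giving an honest $\chi^2_r$ for $\|\check\bfZ_{i[l]}\wt\bfV^{(t-1)}_{[l]*}\|^2$. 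The discrepancy $\|\bfZ_{i[l]}-\check\bfZ_{i[l]}\|$ is controlled by $\max_j|Y_j/\|\bfZ_{I^c j}\|-1|$, which is $O(\sqrt{(\log m)/|I^c|})$ by a $\chi^2$ ratio bound; under \prettyref{cond:asymp} this contributes at most $0.01\sqrt{\log m}$, which is precisely what the $1.01$ slack in $\gamma_u$ absorbs. This decomposition is clean and requires no statement about conditional laws. Your end-to-end accounting ($\beta\ge 4$, $T=O(m)$, union bound) is correct once the independence step is handled this way.
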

\begin{proof}
First of all, by \prettyref{lmm:initial-set}, with probability at least $1-O(m^{-2})$, $J^0 = \wt{J}^0 \subset J_+ \subset J$, and so $\wt\bfV^{(0)} = \bfV^{(0)}$. Define event $E^{(0)} = \{\bfV^{(0)} = \wt\bfV^{(0)}\}$.

We now focus on the first iteration. Define event
\begin{align*}
E_u^{(1)} = \sth{\norm{\bfZ_{i*} \wt\bfV^{(0)}} < \gamma_u,\, \forall i\in I^c}.
\end{align*} 
On $E^{(0)}\cap E_u^{(1)}$, for any $i\in I^c$, 
$\bfU^{(1),\mul}_{i*} = \bfX_{i*} \bfV^{(0)} = \bfZ_{i*} \wt\bfV^{(0)}$.
Thus, $\norm{\bfU^{(1),\mul}_{i*}} < \gamma_u $ and so $\bfU^{(1),\thr}_{i*} = \bszero$ for all $i\in I^c$. 
This further implies $\bfU^{(1)}_{I^c *} = \bszero$ and $\bfU^{(1)} = \wt\bfU^{(1)}$.
Further define event
\begin{align*}
E_v^{(1)} = \sth{\norm{(\bfZ_{*j})' \wt\bfU^{(1)}} < \gamma_v,\,\forall j\in J^c }.
\end{align*}
Then by similar argument, on the event $E^{0}\cap E_u^{(1)}\cap E_v^{(1)}$,
we have $\bfV^{(1)}_{J^c *} = \bszero$ and $\bfV^{(1)} = \wt\bfV^{(1)}$.

We now bound the probability of $(E_u^{(1)})^c$.
Without loss of generality, let $J\subset [l]$. 
Note that for any $j\in J$, $i\in I^c$, $\wt\bfV^{(0)}$ depends on $Z_{ij}$ only through $\norm{\bfZ_{I^c j}}^2$ in the selection of $\wt{J}^0$ in the oracle version of \prettyref{algo:init}.
Therefore, $\wt\bfV^{(0)}$ is independent of $\frac{Z_{ij}}{\norm{\bfZ_{I^c j}}}$. 
Let $k'=|I^c|$ and $Y_1,\dots Y_l$ be i.i.d.~$\chi_{k'}$ random variables independent of $\bfZ$.
For any $i\in I^c$ and $j\in [l]$, let
% Now define
\begin{align*}
	\check{Z}_{ij} = Y_j \frac{Z_{ij}}{\norm{\bfZ_{I^c j}}}, 
	% \quad i\in I^c, j\in [l].
\end{align*}
and $\check\bfZ_{i[l]} = (\check{Z}_{i1},\dots, \check{Z}_{il}) \in \reals^{1\times l}$.
% We denote the $1\times l$ vector $(\wt{Z}_{i1},\dots, \wt{Z}_{il})$ by $\wt\bfZ_{i[l]}$.
Since $\supp(\wt\bfV^{(0)}) \subset J\subset [l]$ on the event $E^{(0)}$, 
we obtain that for any $i\in I^c$, 
$\bfZ_{i*}\wt\bfV^{(0)} = \bfZ_{i[l]}\wt\bfV^{(0)} = \check\bfZ_{i[l]}\wt\bfV^{(0)} + (\bfZ_{i[l]} - \check\bfZ_{i[l]})\wt\bfV^{(0)}$. 
Thus,
\begin{align*}
\norm{\bfZ_{i*}\wt\bfV^{(0)}} \leq 
\norm{\check\bfZ_{i[l]}\wt\bfV^{(0)}_{[l]*}} + 
\norm{(\bfZ_{i[l]} - \check\bfZ_{i[l]})\wt\bfV^{(0)}_{[l]*}}
\leq 
\norm{\check\bfZ_{i[l]}\wt\bfV^{(0)}_{[l]*}} + 
\norm{\bfZ_{i[l]} - \check\bfZ_{i[l]}} \opnorm{\wt\bfV^{(0)}_{[l]*}}.
\end{align*}
For the first term on the rightmost side, since $\check\bfZ_{i[l]}$ is independent of $\wt\bfV^{(0)}$, $\norm{\check\bfZ_{i[l]}\wt\bfV^{(0)}_{[l]*}}^2\sim \chi^2_r$, and so by \prettyref{lmm:chisq}, with probability at least $1-O(m^{-\beta})$,
\begin{align*}
\norm{\check\bfZ_{i[l]}\wt\bfV^{(0)}_{[l]*}}^2 
\leq r + 2\sqrt{\beta r\log{m}}+2\beta\log{m}.
\end{align*}
For the second term, we first note that $\opnorm{\wt\bfV^{(0)}_{[l]*}} = 1$ since it has orthonormal columns.
Moreover, $\check\bfZ_{i[l]} - \bfZ_{i[l]} = \bfZ_{i[l]}\diag\pth{\frac{Y_1}{\norm{\bfZ_{I^c 1}}}-1,\dots,\frac{Y_1}{\norm{\bfZ_{I^c l}}}-1}$.
Thus,
\begin{align*}
\norm{\check\bfZ_{i[l]} - \bfZ_{i[l]}} \leq \norm{\bfZ_{i[l]}} \max_{j\in [l]} \left| \frac{Y_j}{\norm{\bfZ_{I^c j}}}-1 \right|.
\end{align*}
By \prettyref{lmm:chisq}, with probability at least $1-O(m^{-\beta})$,
\begin{align*}
\norm{\bfZ_{i[l]}}^2 \leq l + 2\sqrt{\beta l\log{m}} + 2\beta \log{m}.
\end{align*}
By \prettyref{lmm:chisq-ratio}, for any $j\in [l]$, with probability at least $1 - O(m^{-(\beta+1)})$,
\begin{align*}
\left| \frac{Y_j}{\norm{\bfZ_{I^c j}}}-1 \right|
\leq \left| \frac{Y_j^2}{\norm{\bfZ_{I^c j}}^2}-1 \right| 
\leq 4\cdot 1.01 \cdot \sqrt{\frac{(\beta+1)\log{m}}{k'}}.
\end{align*}
Here, the last inequality holds 
for sufficient large values of $m$ and $n$,
since \prettyref{cond:asymp} implies that $(\log{m})/k' = o(1)$.
% \nb{seems to need the following condition to proceed: $\frac{l}{m-k}\vee \frac{k}{n-l} = o(1)$.}
By the union bound, with probability at least $1-O(m^{-\beta})$, for sufficient large values of $m$ and $n$,
\begin{align*}
\norm{\check\bfZ_{i[l]} - \bfZ_{i[l]}} \leq 0.01\sqrt{\log{m}}\, ,
\end{align*}
since \prettyref{cond:asymp} ensures that $l/k'=o(1)$.
% \nb{double check the conditions required for this inequality}
Assembling the last six displays, we obtain that for any $\beta > 1$, with probability at least $1-O(m^{-\beta})$,
\begin{align*}
\norm{\bfZ_{i*}\wt\bfV^{(0)}} \leq \sqrt{r + 2\sqrt{\beta r\log{m}}+2\beta\log{m}} + 
0.01\sqrt{\log{m}}
\leq \gamma_v.
% \leq 1.01\sqrt{r + 2\sqrt{\beta r\log{m}}+2\beta \log{m}}.
\end{align*}
Applying the union bound again, we obtain that when $\beta \geq 4$ in \eqref{eq:thr},
\begin{align}
\label{eq:iter-thr-event}
\Prob\sth{(E^{(1)}_u)^c} \leq O(m^{-3}).
\end{align}
Similarly, for any $j\in J^c$, $\wt\bfU^{(1)}$ depends on $Z_{ij}$ only through $\norm{\bfZ_{i J^c}}$.
Therefore, by analogous arguments, we also obtain \eqref{eq:iter-thr-event} for $(E_v^{1})^c$ with any fixed $\beta \geq 4$.
% \nb{to be revised -- end}

Turn to subsequent iterations, we further define events
\begin{align*}
E_u^{(t)} = \sth{\norm{\bfZ_{i*} \wt\bfV^{(t-1)}} < \gamma_u,\, \forall i\in I^c},
\quad 
E_v^{(t)} = \sth{\norm{(\bfZ_{*j})' \wt\bfU^{(t)}} < \gamma_v,\,\forall j\in J^c },
\quad t = 2,\dots, 3T.
\end{align*}
Iterating the above arguments, we obtain that on the event $E^{(0)}\cap (\cap_{t=1}^{3T} E_u^{(t)}) \cap (\cap_{t=1}^{3T} E_v^{(t)})$, 
$\bfU^{(t)}_{I^c *} = \bszero$, $\bfV^{(t)}_{J^c *} = \bszero$, 
and so $\bfU^{(t)} = \wt\bfU^{(t)}$ and $\bfV^{(t)} = \wt\bfV^{(t)}$.
Moreover, by similar argument to that for \eqref{eq:iter-thr-event}, we can bound each $\Prob\{(E_u^{(t)})^c\}$ and $\Prob\{(E_v^{(t)})^c\}$ by $O(m^{-3})$ for all $t=2,\dots, 3T$ with any fixed $\beta\geq 4$ in \eqref{eq:thr}.
Finally, under \prettyref{cond:asymp}, $T = O(m)$, and so
\[
\Prob\sth{E^{(0)}\cap (\cap_{t=1}^{3T} E_u^{(t)}) \cap (\cap_{t=1}^{3T} E_v^{(t)})} = 1 - O(m^{-2}).
\]
This completes the proof.
\end{proof}

\begin{lemma}
\label{lmm:stop}
Let $\wh{T}$ be defined in \eqref{eq:whT}. With probability at least $1-O(m^{-2})$, $T\leq \wh{T} \leq 3T$.
\end{lemma}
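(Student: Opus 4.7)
The plan is to show that on the intersection of the high-probability events from Lemmas \ref{lmm:oracle}, \ref{lmm:initial-set}, and \ref{lmm:initial}, the data-driven singular value $d_r^{(0)}$ in \eqref{eq:whT} is close enough to $d_r$ that the arithmetic difference between $\wh T$ and $T$ sits in the desired range. Concretely, I would first argue that $d_r^{(0)} = d_r(1+o(1))$ with high probability, and then verify $T \leq \wh T \leq 3T$ by a direct computation exploiting the gap between the leading constants $1.1$ and $1.01$ in the two definitions.

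\textbf{Step 1 (approximating $d_r^{(0)}$).} By \prettyref{lmm:initial-set}, the deterministic sets satisfy $I^0 = \wt I^0$ and $J^0 = \wt J^0$ with probability $1-O(m^{-2})$, so $\bfX^{(0)}$ coincides with its oracle analogue $\wt\bfX^{(0)}$ and $d_r^{(0)} = \sigma_r(\wt\bfX^{(0)})$. Applying Weyl's inequality twice,
\[
|d_r^{(0)} - d_r| \;\leq\; \opnorm{\wt\bfX^{(0)} - \wt\bfX} + \opnorm{\wt\bfX - \bfM}.
\]
The first term on the right was already bounded in the proof of \prettyref{lmm:initial} by a quantity of order $\frac{d_1}{d_r}[(k^2 n\log m)^{1/4} + (l^2 m\log m)^{1/4}] + \sqrt{k}+\sqrt{l}+2\sqrt{\log m}$, and \prettyref{lmm:oracle} controls the second by $\sqrt{k}+\sqrt{l}+2\sqrt{\log m}$. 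Both are $o(d_r)$ under \prettyref{cond:asymp}, so $d_r^{(0)} = d_r(1+o(1))$, and in particular $\log\bigl((d_r^{(0)})^2/\gamma^2\bigr) = \log(d_r^2/\gamma^2) + o(1)$.

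\textbf{Step 2 (bracketing $\wh T$).} Writing $A = \log m/\log 2$ and using $\gamma_u = \gamma_v = \gamma$ so that $k\gamma_u^2 \vee l\gamma_v^2 = (k\vee l)\gamma^2$, Step 1 gives
\[
\wh T - T \;=\; \tfrac12\!\left[ 0.09\bigl(A + \log(d_r^2/\gamma^2)\bigr) + 1.01\log(k\vee l) \right] + o(1),
\]
\[
3T - \wh T \;=\; \tfrac12\!\left[ 1.93\bigl(A + \log(d_r^2/\gamma^2)\bigr) - 3.03\log(k\vee l) \right] + o(1).
\]
The first display is manifestly non-negative since each summand is, using that $d_r^2/\gamma^2 \geq 1$ under \prettyref{cond:asymp}. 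For the second, I would bound $\log(k\vee l) \leq \log m$ via $k\vee l \leq c(m\wedge n)$ from \prettyref{cond:asymp}, and use the signal part of the same condition, $d^2 \gtrsim r(k\sqrt{n\log m}+l\sqrt{m\log m})$, to produce the lower bound $\log(d_r^2/\gamma^2) \gtrsim \tfrac12 \log m$ (noting $\gamma^2 = O(r \vee \log m)$). Substituting, the bracket becomes at least $1.93\bigl(A + \tfrac12\log m\bigr) - 3.03\log m \geq 0$ for all sufficiently large $m$, which finishes the proof.

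\textbf{The main obstacle} is Step 2's upper comparison: the slack between $1.1$ and $3\cdot 1.01$ absorbs the $A$-term easily, but the $\log(k\vee l)$ correction is not obviously dominated and requires invoking the signal-strength part of \prettyref{cond:asymp} to generate enough logarithmic headroom from $\log(d_r^2/\gamma^2)$. The rest is careful bookkeeping of constants and the $o(1)$ terms produced by Step 1.
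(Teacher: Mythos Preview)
Your proof is correct and follows essentially the same approach as the paper's. Both arguments (i) use Lemmas~\ref{lmm:oracle}--\ref{lmm:initial} to show $d_r^{(0)} = d_r(1+o(1))$, and (ii) then compare $\wh T$ and $T$ through arithmetic on the constants $1.1$ versus $1.01$ and the discrepancy $\log(k\vee l)$. The paper first sandwiches $T$ between $\tfrac{1.01}{2}(0.44\log m + \log(d_r^2/\gamma^2))$ and $\tfrac{1.01}{2}(A + \log(d_r^2/\gamma^2))$ and then asserts the conclusion; you instead compute $\wh T - T$ and $3T - \wh T$ directly, which is equivalent. If anything, your write-up is more transparent: you make explicit that the upper bound $\wh T\leq 3T$ genuinely requires the signal-strength part of \prettyref{cond:asymp} to produce $\log(d_r^2/\gamma^2)\gtrsim \tfrac12\log m$, a point the paper leaves implicit in its final sentence.
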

\begin{proof}
By definition \eqref{eq:thr} and \eqref{eq:T}, we have
\begin{align*}
T \leq \frac{1.01}{2} \pth{\frac{\log{m}}{\log{2}} + \log\frac{d_r^2}{\gamma^2}}.
\end{align*}
On the other hand, note that $1/\log{2} \geq 1.44$ and that $\log(k\vee l)\leq \log{m}$ under the assumption that $m\geq n$, and hence
\begin{align*}
T \geq \frac{1.01}{2} \pth{\frac{\log{m}}{\log{2}} - \log{m} + \log\frac{d_r^2}{\gamma^2}}
\geq \frac{1.01}{2}\pth{0.44 \log{m} + \log\frac{d_r^2}{\gamma^2}}.
\end{align*}
% \nnbb{be more specific on the bound for $|\wt{d}_r^{(0)}-\wt{d}_r|$?}
On the other hand, on the event such that the conclusions of Lemmas \ref{lmm:oracle}--\ref{lmm:initial} hold, we have
\begin{align*}
|d_r^{(0)} -d_r| & \leq |d_r^{(0)} - \wt{d}_r| + |\wt{d}_r - d_r| \\
& = |\wt{d}_r^{(0)} - \wt{d}_r| + |\wt{d}_r - d_r|\\
& \leq \opnorm{\wt{\bfX}^{(0)} - \wt{\bfX}} + o(d_r)\\
& = o(d_r).
\end{align*}
Hence for sufficiently large values of $m$ and $n$, $\log\gamma^2 > 1$ and with probability at least $1-O(m^{-2})$, $|\log(d_r^{(0)})^2 /\log d_r^2 - 1|\leq 0.01$.
When the above inequalities all hold, we obtain $\wh{T}\in [T, 3T]$.
\end{proof}

We are now in the position to prove \prettyref{thm:upper}.
\begin{proof}[Proof of \prettyref{thm:upper}]
	Note that on the events such that the conclusions of Lemmas \ref{lmm:oracle}--\ref{lmm:stop} hold, we have
	\begin{align*}
	& \hskip -1em \fnorm{\wh{\bfM} - \bfM}\\ 
	& = \fnorm{\wh\bfP_u^{(\wh{T})} \bfX \wh\bfP_v^{(\wh{T})} - \bfU\bfD\bfV'}\\
	& \leq \fnorm{\wh\bfP_u^{(\wh{T})} \bfX \wh\bfP_v^{(\wh{T})} - \wt\bfU\wt\bfD\wt\bfV'} + \fnorm{\wt\bfU\wt\bfD\wt\bfV' - \bfU\bfD\bfV'}\\
	& = \fnorm{\wt\bfP_u^{(\wh{T})} \wt\bfX \wt\bfP_v^{(\wh{T})} - \wt\bfP_u \wt\bfX \wt\bfP_v} 
	+ \fnorm{\wt\bfP_u\wt\bfX \wt\bfP_v - \bfP_u \bfM\bfP_v}\\
	& \leq \fnorm{\wt\bfP_u^{(\wh{T})} \wt\bfX \wt\bfP_v^{(\wh{T})} - \wt\bfP_u \wt\bfX \wt\bfP_v} 
	+ \fnorm{\wt\bfP_u\wt\bfX \wt\bfP_v - \wt\bfP_u \bfM \wt\bfP_v}\\
	& \quad
	+ \fnorm{\wt\bfP_u \bfM \wt\bfP_v - \bfP_u \bfM \bfP_v}.
	\end{align*}	
	Here, the first and the second inequalities are both due to the triangle inequality.
	The second equality is due to \prettyref{lmm:stop} and the facts that $\supp(\wt\bfU^{(t)})\subset I$, $\supp(\wt\bfV^{(t)})\subset J$ and that $\wt\bfU$ and $\wt\bfV$ collect the first $r$ left and right singular vectors of $\wt\bfX$.

	We now bound each of the three terms on the rightmost side of the last display.
	First, on the event such that the conclusions of \prettyref{prop:converge} and \prettyref{lmm:stop} hold, we have
	\begin{align*}
	\fnorm{\wt\bfP_u^{(\wh{T})} \wt\bfX \wt\bfP_v^{(\wh{T})} - \wt\bfP_u \wt\bfX \wt\bfP_v} \leq C\sqrt{k\gamma_u^2 + l\gamma_v^2}.
	\end{align*}
	Next, by similar argument to that leading to the conclusion of \prettyref{lmm:initial}, with probability at least $1-O(m^{-2})$
	\begin{align*}
	\fnorm{\wt\bfP_u\wt\bfX \wt\bfP_v - \wt\bfP_u \bfM \wt\bfP_v}
	& \leq \fnorm{\wt\bfX  - \bfM } = \fnorm{\bfZ_{IJ}} \\
	& \leq \sqrt{r}\opnorm{\bfZ_{IJ}} \\
	& \leq \sqrt{r}(\sqrt{k}+\sqrt{l}+2\sqrt{\log{m}}).
	\end{align*}
	Last but not least, 
	\begin{align*}
	& \hskip -1em \fnorm{\wt\bfP_u \bfM \wt\bfP_v - \bfP_u \bfM \bfP_v} \\
	& \leq \fnorm{(\wt\bfP_u - \bfP_u)\bfM \wt\bfP_v} + 
	\fnorm{\bfP_u\bfM(\wt\bfP_v - \bfP_v)}\\
	& \leq d_1 (\fnorm{\wt\bfP_u - \bfP_u} + \fnorm{\wt\bfP_v - \bfP_v})\\
	& \leq \kappa \sqrt{r}(\sqrt{k}+\sqrt{l}+2\sqrt{\log{m}}).
	\end{align*}
	Assembling the last four displays, we complete the proof for the case of Frobenius norm, \ie, $q=2$.
	To obtain the result for all $q\in [1,2)$, simply note that for any matrix $\bfA$, $\sqnorm{\bfA} \leq (\rank(\bfA))^{\frac{1}{q}-\frac{1}{2}}\fnorm{\bfA}$ and that $\rank(\wt\bfM - \bfM)\leq 2r$. This completes the proof.
\end{proof}

\subsection{Proof of \prettyref{prop:r-hat}}
\label{sec:proof-r-hat}
\begin{proof}[Proof of \prettyref{prop:r-hat}]
Without loss of generality, assume that $\sigma = 1$.
We first show that $\wh{r}\leq r$ with probability at least $1-O(m^{-2})$.
To this end, note that
\begin{align*}
\Prob\sth{\wh{r} > r} & = \Prob\sth{\sigma_{r+1}(\bfX_{I^0 J^0}) > \delta_{|I^0| |J^0|}}\\
& \leq \Prob\sth{\max_{|A| = |I^0|, |B| = |J^0|}\sigma_{r+1}(\bfX_{A B}) > \delta_{|A| |B|}}\\
& \leq \sum_{i=r+1}^m \sum_{j=r+1}^n \Prob\sth{\max_{|A| = i, |B|=j} \sigma_{r+1}(\bfX_{AB}) > \delta_{ij}}.
% \\
% & \leq \sum_{i=r+1}^m \sum_{j=r+1}^n \Prob\sth{\max_{|A| = i, |B|=j} \sigma_{1}(\bfX_{AB}) > \delta_{ij}}.
\end{align*}
By the interlacing property of singular values, we know that for $\bfZ$, a $m\times n$ standard Gaussian random matrix, 
\[
\max_{|A| = i, |B|=j} \sigma_{r+1}(\bfX_{AB}) \stackrel{st}{<}
\max_{|A| = i-r, |B|=j-r} \sigma_{1}(\bfZ_{AB}) 
\stackrel{st}{<} 
\max_{|A| = i, |B|=j} \sigma_{1}(\bfZ_{AB}),
\]
where $\stackrel{st}{<}$ means stochastically smaller.
Together with the union bound, this implies
\begin{align*}
\Prob\sth{\max_{|A| = i, |B|=j} \sigma_{r+1}(\bfX_{AB}) > \delta_{ij}}
& \leq {m\choose i}{n\choose j}\Prob\sth{\sigma_1(\bfZ_{AB}) > \delta_{ij}}\\
& \leq \pth{\eexp m\over i}^i \pth{\eexp n\over j}^j
\exp\pth{-i\log\frac{\eexp m}{i} - j\log\frac{\eexp m}{j} - 4\log m }\\
& = m^{-4}.
\end{align*}
Here, the second inequality is due to ${p\choose k}\leq (\eexp p/k)^k$ for any $k\in [p]$ and the Davidson-Szarek bound \citep{Davidson01}.
As $n\leq m$ under \prettyref{cond:asymp}, we obtain
\begin{align*}
\Prob\sth{\wh{r} > r} \leq 	\sum_{i=r+1}^m \sum_{j=r+1}^n m^{-4} \leq m^{-2}.
\end{align*}

To show that $\wh{r} \geq r$ with probability at least $1-O(m^{-2})$, we note that on the event such that the conclusions of Lemmas \ref{lmm:oracle}--\ref{lmm:initial} hold, $\sigma_r(\bfX_{I^0 J^0}) = \sigma_r(\wt\bfX^0) = \wt{d}_r^{(0)}$.
So by the triangle inequality, the conclusion of \prettyref{lmm:oracle} and the proof of \prettyref{lmm:initial},
 % \nnbb{make a numbered equation?}, 
we obtain that
\begin{align*}
\sigma_r(\bfX_{I^0 J^0}) = \wt{d}_r^{(0)} \geq d_r - \opnorm{\bfX - \wt\bfX} - \opnorm{\wt\bfX - \wt\bfX^0}
\geq d_r/4 > \delta_{kl},
\end{align*}
where the second last and the last inequalities hold under \prettyref{cond:asymp} for sufficiently large values of $m$ and $n$.
Note that on the event such that the conclusion of \prettyref{lmm:initial-set} holds, we have $|I^0|\leq k$ and $|J^0|\leq l$ and so $\delta_{|I^0| |J^0|}\leq \delta_{kl}$.
This completes the proof.
\end{proof}

%!TEX root = ssvd.tex

\appendix

\section{Appendix}
\label{app:appendix}

\begin{lemma}[Lemma 8.1 in \cite{Birge01Alternative}]
	\label{lmm:chisq}
Let $X$ follow the non-central chi square distribution $\chi^2_\nu(\delta)$ with degrees of freedom $\nu$ and non-centrality parameter $\delta \geq 0$. 
Then for any $x>0$, 
\begin{align*}
& \Prob\sth{X\geq \nu+\delta + 2\sqrt{(\nu + 2\delta)x} + 2x} \leq \eexp^{-x},\\
& \Prob\sth{X\leq \nu+\delta - 2\sqrt{(\nu + 2\delta)x}}  \leq \eexp^{-x}.
\end{align*}
\end{lemma}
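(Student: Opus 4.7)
The plan is to apply the standard Chernoff bound to the moment generating function of $X\sim \chi^2_\nu(\delta)$. Recall that for $t<1/2$, $\Expect e^{tX} = (1-2t)^{-\nu/2}\exp(\delta t/(1-2t))$, so the cumulant generating function is $\psi(t) = -\frac{\nu}{2}\log(1-2t) + \delta t/(1-2t)$.

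First, I would derive a convenient quadratic-type upper bound on $\psi$. Using the elementary inequality $-\log(1-s) \leq s + s^2/(2(1-s))$ for $s\in [0,1)$ with $s=2t$, together with the identity $\delta t/(1-2t) = \delta t + 2\delta t^2/(1-2t)$, one obtains $\psi(t) \leq (\nu+\delta)t + (\nu+2\delta)t^2/(1-2t)$. By Chernoff, $\Prob\{X\geq u\} \leq \exp\!\pth{-t(u-\nu-\delta) + (\nu+2\delta)t^2/(1-2t)}$ for every $t\in (0,1/2)$.

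Next, I would plug in $u = \nu+\delta + 2\sqrt{(\nu+2\delta)x} + 2x$ and select $t$ explicitly. Writing $A=\nu+2\delta$ and $r=\sqrt{x/A}$, the choice $t^* = r/(1+2r) \in (0,1/2)$ satisfies $1-2t^* = 1/(1+2r)$, hence $t^*/(1-2t^*)=r$. A short computation then shows that $-t^*(u-\nu-\delta) + A(t^*)^2/(1-2t^*) = -t^* \cdot rA(1+2r) = -r^2 A = -x$, so the Chernoff exponent equals exactly $-x$, giving the upper tail bound.

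For the lower tail, I would apply the Chernoff argument with $-t$ in place of $t$. Using $\log(1+s) \geq s - s^2/2$ for $s\geq 0$ and $\delta t/(1+2t) \geq \delta t - 2\delta t^2$, one gets the cleaner quadratic bound $\psi(-t) \leq -(\nu+\delta)t + (\nu+2\delta)t^2$. With $u = \nu+\delta - 2\sqrt{(\nu+2\delta)x}$, the Chernoff exponent $tu + \psi(-t)$ is at most $-2t\sqrt{Ax} + At^2$, a genuine quadratic in $t$ minimized at $t^* = \sqrt{x/A}>0$, where it equals $-x$. The main obstacle is essentially bookkeeping: guessing the right $t^*$ and verifying that the algebraic simplification collapses exactly to $-x$ in both directions; everything else is a routine application of Markov's inequality to the Laplace transform.
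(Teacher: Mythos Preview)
Your argument is correct. The paper itself does not prove this lemma; it simply quotes it from Birg\'e (2001), so there is no ``paper proof'' to compare against. Your Chernoff approach via the explicit cumulant bound $\psi(t)\le (\nu+\delta)t+(\nu+2\delta)t^2/(1-2t)$ for $t\in(0,1/2)$ and the quadratic bound $\psi(-t)\le -(\nu+\delta)t+(\nu+2\delta)t^2$ for $t\ge 0$, followed by the exact optimizers $t^*=r/(1+2r)$ and $t^*=\sqrt{x/(\nu+2\delta)}$ respectively, is precisely the standard route (and essentially the one in Birg\'e's original proof). The algebra you sketch does collapse to $-x$ in both tails, and the two elementary inequalities you invoke, $-\log(1-s)\le s+s^2/(2(1-s))$ and $\log(1+s)\ge s-s^2/2$, are easily verified by differentiation.
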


\begin{lemma}
\label{lmm:chisq-ratio}
Let $X$ and $Y$ be two independent $\chi^2_\nu$ random variables. Then for any $x > 0$, 
\begin{align*}
\Prob\sth{\left| \frac{X}{Y}-1 \right| \leq \frac{4\sqrt{ x/\nu} (1+\sqrt{ x/\nu}  )}{1 - 2\sqrt{x/\nu}} } \geq 1 - 4\eexp^{-x}.
\end{align*}
\end{lemma}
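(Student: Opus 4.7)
The plan is to reduce the ratio concentration to two one-sided concentration bounds for $\chi^2_\nu$ random variables, supplied by \prettyref{lmm:chisq} in the special case of non-centrality parameter $\delta = 0$. First I would apply \prettyref{lmm:chisq} with $\delta = 0$ separately to $X$ and $Y$, obtaining four tail events each of probability at most $\eexp^{-x}$. By the union bound, with probability at least $1 - 4\eexp^{-x}$ we simultaneously have
\begin{align*}
\nu - 2\sqrt{\nu x}\leq X \leq \nu + 2\sqrt{\nu x} + 2x,\qquad
\nu - 2\sqrt{\nu x}\leq Y \leq \nu + 2\sqrt{\nu x} + 2x.
\end{align*}

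On this event I would then control the numerator and denominator of $|X/Y-1| = |X-Y|/Y$ separately. The numerator satisfies $|X-Y|\leq 4\sqrt{\nu x} + 2x$ by subtracting the upper bound of one variable from the lower bound of the other (in either order). The denominator satisfies $Y \geq \nu - 2\sqrt{\nu x} = \nu\bigl(1 - 2\sqrt{x/\nu}\bigr)$, which is positive provided the claimed bound is informative (\ie $2\sqrt{x/\nu}<1$; otherwise the statement is vacuous after factoring). Dividing, I obtain
\begin{align*}
\left|\frac{X}{Y} - 1\right|\leq \frac{4\sqrt{\nu x} + 2x}{\nu\bigl(1 - 2\sqrt{x/\nu}\bigr)} = \frac{4\sqrt{x/\nu} + 2x/\nu}{1 - 2\sqrt{x/\nu}}.
\end{align*}

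Finally I would observe that $2x/\nu \leq 4x/\nu$, so the right-hand side is bounded by $\frac{4\sqrt{x/\nu}\bigl(1+\sqrt{x/\nu}\bigr)}{1 - 2\sqrt{x/\nu}}$, which is exactly the claimed quantity. This completes the argument. There is no substantive obstacle; the only minor care point is making sure the four one-sided events are combined correctly and that the slight looseness $2x/\nu \leq 4x/\nu$ is acknowledged so the stated form of the bound matches.
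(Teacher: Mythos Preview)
Your proposal is correct and follows essentially the same approach as the paper: both apply \prettyref{lmm:chisq} with $\delta=0$ to $X$ and $Y$, union-bound the four one-sided tail events, and divide a numerator bound by the lower bound $Y\geq \nu-2\sqrt{\nu x}$. The only cosmetic difference is that the paper routes through $\nu$ via the triangle inequality $|X/Y-1|\leq (|X-\nu|+|Y-\nu|)/Y$, which gives numerator $4\sqrt{\nu x}+4x$ directly, whereas you bound $|X-Y|\leq 4\sqrt{\nu x}+2x$ and then loosen $2x$ to $4x$; the end result is identical.
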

\begin{proof}
By the triangle inequality,
\begin{align*}
\left| \frac{X}{Y}-1 \right|\leq \frac{1}{|Y|} \pth{|X-\nu| + |Y-\nu|}.
\end{align*}
By \prettyref{lmm:chisq}, for any $x>0$, each of the following holds with probability at least $1-2\eexp^{-x}$:
\begin{align*}
& |X-\nu| \leq 2\sqrt{\nu x} + 2x,\\
& |Y-\nu| \leq 2\sqrt{\nu x} + 2x, \quad \mbox{and}\quad |Y|\geq \nu - 2\sqrt{\nu x}.
\end{align*}
Assembling the last two displays, we complete the proof.
\end{proof}

\bibliographystyle{plainnat}
\bibliography{refs}

\end{document}